\theoremstyle{definition}
\newtheorem{definition}{Definition}[section]
\newtheorem{theorem}{Theorem}[section]
\newtheorem{proposition}{Proposition}[section]
\newtheorem{corollary}{Corollary}[section]
\newtheorem{lemma}{Lemma}[section]
\newtheorem{example}{Example}[section]
\newtheorem{conjecture}{Conjecture}[section]
\title{Asymptotics of the first-passage function on free and Fuchsian groups}
\author{Petr Kosenko}
\begin{document}

\maketitle

\begin{abstract}
In this preprint we derive explicit estimates for the asymptotics of the first-passage function for a specific class of random walks on free groups and use them to prove the singularity of the hitting measure for a similarly defined class of random walks on Fuchsian groups.
\end{abstract}

\section{Introduction}
We continue to investigate the \textbf{singularity conjecture} for cocompact Fuchsian groups:

\begin{conjecture}[Fuchsian singularity conjecture]
	\label{Fuchsian singularity conjecture}
	For every finite-range admissible random walk $(X_n)$ generated by a probability measure $\mu$ on a cocompact Fuchsian group $\Gamma$, the hitting measure $\nu_\mu$ is singular with respect to the Lebesgue measure on $S^1 \simeq \partial \Gamma$.
\end{conjecture}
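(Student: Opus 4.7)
The approach I would follow is the standard entropy--drift comparison: singularity of $\nu_\mu$ is equivalent to the Hausdorff dimension of $\nu_\mu$ (in the visual metric on $\partial\Gamma \simeq S^1$) being strictly less than one, which by the dimension formula
\[
\dim \nu_\mu \;=\; \frac{h_\mu}{v_\Gamma\, \ell_\mu}
\]
and the fundamental inequality $h_\mu \le v_\Gamma \ell_\mu$ reduces to establishing the strict inequality $h_\mu < v_\Gamma\, \ell_\mu$ for every finite-range admissible $\mu$.

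First I would lift the analysis to a free group. Fix a Dirichlet fundamental polygon $\mathcal{P}$ for $\Gamma$ and consider a Series/Nielsen-type coding of $\partial\Gamma$ by symbolic sequences in the side-pairing generators, which realises $\Gamma$ as a quotient of (an index-two extension of) a free group $F_k$. Trajectories of $(X_n)$ can then be recoded, after passing to a suitable induced/first-passage walk, as trajectories of a random walk on $F_k$ whose step distribution is determined by $\mu$ together with the geometry of $\mathcal{P}$.

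Next I would apply the first-passage estimates on $F_k$ proved in this paper to obtain sharp upper bounds on the tail of the induced step distribution. These estimates give the leading-order behaviour of the probability that the $F_k$-walk first enters a large ball $B_R \subset F_k$ on a particular generator, and thereby control both the entropy rate and the word-length drift of the induced walk. Combining these with the quasi-isometry between the word metric on $F_k$ and the hyperbolic metric restricted to $\mathcal{P}$-edges should yield quantitative bounds on $h_\mu$ and $\ell_\mu$ strong enough to produce the required strict gap.

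The main obstacle is uniformity of the estimates across the full class of finite-range admissible $\mu$. The first-passage function on $F_k$ depends sensitively on the induced step distribution, which varies wildly as $\mu$ ranges over all finite-range admissible measures: nearest-neighbour measures on $\Gamma$ admit a clean coding as a nearest-neighbour-like walk on $F_k$, whereas long-range steps generate induced steps with complicated dependencies and heavy Green-function algebra. Overcoming this will require a version of the free-group asymptotics that is stable under such coding perturbations, likely obtained by expressing the Green function of the induced walk as an algebraic extension of the Green function of the original walk and showing that the location and order of the dominant singularity persist. Once this robustness is in place, the strict-inequality mechanism established on the free side should transport to $\Gamma$ and yield the conjecture.
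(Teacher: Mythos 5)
The statement you are attempting to prove is a \emph{conjecture}, not a theorem of the paper. The paper does not prove it in the stated generality; it only establishes the special case where $\mu'$ is supported on powers of the side-pairing generators (Corollary \ref{main corollary}), and it explicitly demonstrates obstructions to extending the method further. So there is no ``paper's own proof'' for your proposal to match.

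Your strategy (entropy--drift comparison via the dimension formula, reduction to a free group, and controlling the first-passage function) is broadly the same route the paper pursues: Lemma \ref{main lemma} turns singularity into the Green-versus-translation-length inequality $d_\mu(e,g) < \ell(g)$, which is the Green-metric form of the entropy--drift gap, and Proposition \ref{L:reduction to free groups} reduces this to an inequality \eqref{conjecture with spectral radius} involving $\rho_\mu(a_i)$ on $F_m$. However, the obstacle you identify in your last paragraph --- uniformity across all finite-range admissible $\mu$ --- is not merely a technical gap to be filled by a more robust asymptotic; the paper proves it is a genuine failure of the mechanism. Theorem \ref{disproof} and Examples \ref{example3.1}, \ref{example3.2} exhibit admissible finite-range (indeed antisymmetric and even symmetric) measures on $F_m$ for which $\sum_i \bigl( (1+\rho_\mu(a_i)^{-1})^{-1} + (1+\rho_\mu(a_i^{-1})^{-1})^{-1} \bigr) < 1$, so the free-group inequality that drives the argument is strictly violated. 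The sentence in your proposal ``the strict-inequality mechanism established on the free side should transport to $\Gamma$'' is precisely the step the paper shows does \emph{not} follow: the free-group first-passage functions are strictly smaller than the Fuchsian ones, so failure of \eqref{conjecture with spectral radius} on $F_m$ gives no information about $\Gamma$, and one would need to find a loxodromic element other than a power of a generator realizing $d_\mu(e,g) < \ell(g)$, or an entirely different comparison. Your proposal therefore does not close the conjecture; it reproduces the paper's approach up to the point where the paper itself documents that the approach breaks down.
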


This conjecture is usually stated for lattices in $SL_n(\mathbb{R})$, and is attributed to Y. Guivarch, V. Kaimanovich, and F. Ledrappier. While it seems to be difficult to track down the exact source of this conjecture, we will refer to \cite{MR2568439} and \cite{kaimanovich2011matrix} as the primary sources. See \cite{guivarc1980loi}, \cite{kaimanovich1983random}, \cite{MR755228}, \cite{ASENS_1993_4_26_1_23_0}, \cite{leprince2008}, \cite{blachere2011harmonic}, \cite{kaimanovich2011matrix}, \cite{gouezel2018entropy}, \cite{gekhtman}, \cite{https://doi.org/10.48550/arxiv.2212.06581} for related results.

In \cite{10.1093/imrn/rnaa213} and \cite{kosenko_tiozzo_2022} we developed techniques that allowed us to confirm Conjecture \ref{Fuchsian singularity conjecture} for nearest-neighbour random walks on cocompact Fuchsian groups $\Gamma$ generated by side-pairing transformations $(t_1, \dots, t_{2m})$ which identify the opposite sides of a fundamental $2m$-polygon (here we assume $t_{m+i} = t_i^{-1}$).

Recall the definition of the geometric distance: if $x, y \in \Gamma$, then
\[
d_{\mathbb{H}^2}(x, y) := d(x.O, y.O), 
\] 
where $O \in \mathbb{H}^2$ is the fixed basepoint. The main approach we used consists of establishing the following two inequalities:

\begin{equation}
	\label{group-theoretic ineq}
	\sum_{1 \le j \le 2m} \dfrac{1}{1 + e^{d_\mu(e,t_j)}} \ge 1,
\end{equation}

\begin{equation}
	\label{E:McS3}
	\sum_{1 \le j \le 2m} \frac{1}{1 + e^{\ell(t_j)}} < 1,
\end{equation}
where $$\ell(g) := \lim_{n \rightarrow \infty} \frac{d_{\mathbb{H}^2}(e, g^n)}{n}$$ stands for the translation length, and $d_\mu$ denotes the Green metric. We can combine \eqref{group-theoretic ineq} and \eqref{E:McS3} to deduce the existence of a (loxodromic) generator $t_j$ such that 
\[
d_{\mathbb{H}^2}(e, t_j) = \ell(t_j) > d_\mu(e, t_j),
\] 
which ensures the singularity of the harmonic measure due to the following lemma which follows from \cite[Theorem 1.5]{blachere2011harmonic} (or \cite[Theorem 1.1]{gekhtman} for non-symmetric measures):

\begin{lemma}[\cite{kosenko_tiozzo_2022}, Lemma 2.3]
	\label{main lemma}
	Consider a random walk on a Fuchsian group $\Gamma$ generated by an admissible probability measure $\mu$ with finite support. If there exists a loxodromic element $g \in \Gamma$ which satisfies $d_\mu(e, g) < \ell(g)$, then the hitting measure $\nu_\mu$ is singular with respect to the Lebesgue measure on $S^1 \simeq \partial \Gamma$.
\end{lemma}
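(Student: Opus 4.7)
The plan is to prove the contrapositive: assuming $\nu_\mu$ is absolutely continuous with respect to Lebesgue measure on $S^1 \simeq \partial \Gamma$, deduce that every loxodromic $g \in \Gamma$ must satisfy $d_\mu(e, g) \ge \ell(g)$.

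The main input is \cite[Theorem 1.5]{blachere2011harmonic} (or \cite[Theorem 1.1]{gekhtman} for non-symmetric $\mu$), which asserts that $\nu_\mu$ is equivalent to the Patterson--Sullivan measure $\mu_{PS}$ of exponent $\delta_\Gamma$ if and only if the Green metric $d_\mu$ and the rescaled hyperbolic metric $\delta_\Gamma \cdot d_{\mathbb{H}^2}$ are within bounded additive distance on the orbit $\Gamma \cdot O$. One may assume $\Lambda \Gamma = S^1$, for otherwise $\nu_\mu$ is concentrated on a Lebesgue-null set and singularity is automatic; under this assumption $\delta_\Gamma = 1$ and $\mu_{PS}$ is a bounded density multiple of Lebesgue on $S^1$. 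Using $\Gamma$-quasi-invariance of both measures together with ergodicity of the boundary action (Kaimanovich for $\nu_\mu$ and Hopf--Sullivan for Lebesgue), the assumed absolute continuity $\nu_\mu \ll \mathrm{Leb}$ upgrades to mutual equivalence, because the Radon--Nikodym density satisfies a $\Gamma$-cocycle equation and ergodicity forces the set where it vanishes to be either null or conull. Hence the cited theorem yields a constant $C > 0$ such that $|d_\mu(e, h) - d_{\mathbb{H}^2}(e, h)| \le C$ for every $h \in \Gamma$.

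Applying this comparison to $h = g^n$ and letting $n \to \infty$ after dividing by $n$ gives $\ell_\mu(g) := \lim_n d_\mu(e, g^n)/n = \ell(g)$. On the other hand, the left $\Gamma$-invariance of the Green metric implies the subadditivity $d_\mu(e, g^{n+m}) \le d_\mu(e, g^n) + d_\mu(e, g^m)$, so Fekete's lemma gives $\ell_\mu(g) \le d_\mu(e, g)$. Combining the two yields $d_\mu(e, g) \ge \ell(g)$, contradicting the hypothesis.

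I expect the main technical obstacle to be the precise bookkeeping in invoking the cited theorem: specifically, verifying that the metric comparison has the additive form with multiplicative constant equal to $\delta_\Gamma = 1$ rather than some other exponent (such as the drift $v_\mu$), and justifying carefully that one-sided absolute continuity, as opposed to full equivalence, suffices as the hypothesis of \cite[Theorem 1.5]{blachere2011harmonic} in the cocompact Fuchsian setting.
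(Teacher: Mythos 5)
Your proof is correct and essentially the argument the paper has in mind: the paper does not prove this lemma itself, citing \cite[Lemma 2.3]{kosenko_tiozzo_2022} and noting only that it follows from \cite[Theorem 1.5]{blachere2011harmonic} (resp.\ \cite[Theorem 1.1]{gekhtman}). Your reconstruction — upgrading one-sided absolute continuity to mutual equivalence via the $\Gamma$-cocycle property of the Radon--Nikodym derivative and ergodicity, invoking the rough-similarity characterization $|d_\mu(e,h) - d_{\mathbb{H}^2}(e,h)| \le C$ (with $\delta_\Gamma = 1$ in the cocompact Fuchsian case), passing to the stable translation length along $g^n$ to get $\ell_\mu(g) = \ell(g)$, and finishing with the Fekete subadditivity bound $\ell_\mu(g) \le d_\mu(e,g)$ — is precisely the intended chain of reasoning, and the cautionary remark you add about the normalizing constant being $\delta_\Gamma$ rather than, say, the drift is the right thing to watch for.
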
 

\subsection{Formulating results for non-nearest neighbour random walks}
We would like to adapt this method to arbitrary admissible finite-range random walks on cocompact Fuchsian groups with centrally symmetric fundamental polygons. Recall the definition of the first-passage function for a random walk $(X_n)$ generated by a probability measure $\mu$:
\[
F_\mu(x, y) = \sum_{k=0}^{\infty} \mathbb{P}(X_0 = x, X_k = y, X_j \ne y \text{ for } j < k).
\]
Then we denote $$\rho_{\mu}(g) := \liminf\limits_{k \rightarrow \infty} F_{\mu}(e, g^k)^{\frac{1}{k}}.$$ Now we are prepared to formulate an effective sufficient condition for a random walk on a Fuchsian group to satisfy Conjecture \ref{Fuchsian singularity conjecture}.
\begin{proposition}
	\label{L:reduction to free groups}
	Let $\Gamma = \{ t_1, \dots, t_{2m} \}$ be a Fuchsian group with a centrally symmetric fundamental polygon equipped with side-pairing transformations identifying the opposite sides. Consider an admissible random walk on $\Gamma$ generated by a finitely supported probability measure $\mu'$. 
	
	If there exists a finite-range probability measure $\mu$ on $F_m = \{ a_i^{\pm 1} \}_{1 \le i \le m}$ such that $\mu' = p_*(\mu)$, where $p : F_m \rightarrow \Gamma$ is the natural projection defined by $a_i \mapsto t_i$, and such that $\mu$ satisfies	
	\begin{equation}
		\label{conjecture with spectral radius}
		\sum\limits_i \frac{1}{1 + \rho_{\mu}(a_i)^{-1}} + \frac{1}{1 + \rho_{\mu}(a_i^{-1})^{-1}} \ge 1,
	\end{equation}
 	then the hitting measure $\nu'_{\mu'}$ is singular with respect to the Lebesgue measure on $S^1 \simeq \partial \Gamma$. 
\end{proposition}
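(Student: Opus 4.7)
The plan is to apply Lemma \ref{main lemma}: I need to exhibit a loxodromic $h \in \Gamma$ with $d_{\mu'}(e,h) < \ell(h)$. Rather than taking $h$ to be a generator (as in the nearest-neighbour case), I would let $h = t_j^n$ for a suitable index $j$ and a large exponent $n$, so that $d_{\mu'}(e,h)/n$ equals the Green translation length of $t_j$ up to arbitrarily small error.

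To prepare the hypothesis, I first convert $\rho_\mu$ into a translation length. The Green metric satisfies $d_\mu(e,g) = -\log F_\mu(e,g)$, and by the strong Markov property together with left-invariance one has $F_\mu(e,g^{k+l}) \ge F_\mu(e,g^k)\,F_\mu(e,g^l)$. Hence $d_\mu(e,g^k)$ is subadditive in $k$ and Fekete's lemma gives
\[
\ell_\mu(g) := \lim_{k \to \infty} \frac{d_\mu(e,g^k)}{k} = -\log \rho_\mu(g),
\]
so in particular the $\liminf$ in the definition of $\rho_\mu$ is actually a limit. Under the relabelling $a_{m+i}:=a_i^{-1}$, $t_{m+i}:=t_i^{-1}$, the hypothesis \eqref{conjecture with spectral radius} reads $\sum_{j=1}^{2m} \tfrac{1}{1+e^{\ell_\mu(a_j)}} \ge 1$.

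Next I would transfer this inequality from $F_m$ to $\Gamma$. Coupling the two walks on a common probability space so that $S_n^{\mu'} = p(S_n^\mu)$, the event $\{\exists n\colon S_n^\mu = a_j^k\}$ is contained in $\{\exists n\colon S_n^{\mu'} = t_j^k\}$, giving $F_{\mu'}(e,t_j^k) \ge F_\mu(e,a_j^k)$ for every $k$. Taking $k$-th roots and passing to the limit yields $\rho_{\mu'}(t_j) \ge \rho_\mu(a_j)$, hence $\ell_{\mu'}(t_j) \le \ell_\mu(a_j)$. Since $x \mapsto 1/(1+e^x)$ is decreasing,
\[
\sum_{j=1}^{2m} \frac{1}{1+e^{\ell_{\mu'}(t_j)}} \;\ge\; \sum_{j=1}^{2m} \frac{1}{1+e^{\ell_\mu(a_j)}} \;\ge\; 1.
\]

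Inequality \eqref{E:McS3} is a purely geometric statement about the centrally symmetric fundamental polygon and the hyperbolic translation lengths $\ell(t_j)$, so it continues to hold here. Subtracting forces the existence of an index $j$ with $\ell_{\mu'}(t_j) < \ell(t_j)$. Choosing $n$ large enough that $d_{\mu'}(e,t_j^n)/n < \ell(t_j)$, equivalently $d_{\mu'}(e,t_j^n) < \ell(t_j^n)$, Lemma \ref{main lemma} applied to $h = t_j^n$ delivers the singularity of $\nu'_{\mu'}$.

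The main difficulty I foresee is the clean identification $\rho_\mu = e^{-\ell_\mu}$ for a Green translation length $\ell_\mu$ on $F_m$: this requires the submultiplicativity of $F_\mu$ along powers and the upgrade of the $\liminf$ in the definition of $\rho_\mu$ to a genuine limit. Once this identification is in place, the coupling argument $F_{\mu'} \ge F_\mu$ is immediate, and the rest is precisely the pigeonhole step from \cite{kosenko_tiozzo_2022}, with the extra observation that replacing $t_j$ by a sufficiently high power closes the gap between the strict inequality $\ell_{\mu'}(t_j) < \ell(t_j)$ and the pointwise hypothesis of Lemma \ref{main lemma}.
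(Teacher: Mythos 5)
Your proof is correct and follows essentially the same route as the paper: compare $F_{\mu'}(e, p(g)) \ge F_\mu(e, g)$ to transfer \eqref{conjecture with spectral radius} to $\Gamma$, pigeonhole against the geometric inequality \eqref{E:McS3} to locate a $t_j$ whose Green translation rate lies strictly below $\ell(t_j)$, then take a sufficiently high power $t_j^{k'}$ and invoke Lemma \ref{main lemma}. Your explicit appeal to Fekete's lemma (via submultiplicativity of $F_\mu$ along powers) and your explicit citation of \eqref{E:McS3} merely make precise what the paper's terse ``without loss of generality'' leaves implicit; the substance is identical.
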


Proposition \ref{L:reduction to free groups} suggests a reduction of the singularity conjecture to free groups. Recall the definition of cylinder sets in $\partial F_m$:
\[
C(g) = \{ \text{geodesic rays starting from identity containing } g\} \subset \partial F_m. \\
\] 
Now we formulate the main results of the paper.
\begin{theorem}
\label{one geodesic}
	Consider a random walk on $F_m = \left\langle a_i^{\pm 1} \right\rangle$ generated by a symmetric measure $\mu$ such that $\text{supp}(\mu) = \{ a_i^j \}_{\substack{1 \le i \le m \\ 1 \le |j| \le n}}$ for some $n \ge 1$. Then we have
	\begin{equation}
		\label{E:lower bounds for fp function}
		\rho_{\mu}(a_i) \ge \dfrac{\nu(C(a_i))}{1 - \nu(C(a_i))}.
	\end{equation}
	where $\nu$ is the hitting measure on $\partial F_m$ corresponding to $\mu$.
\end{theorem}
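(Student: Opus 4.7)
My plan is to combine superadditivity of the first-passage function with matching upper and lower bounds on the cylinder measures $\nu(C(a_i^k))$, so as to extract the exponential decay rate of $f_k := F_\mu(e, a_i^k)$. Write $\nu_i := \nu(C(a_i))$ for brevity. First, I note that the strong Markov property applied at the first visit to $a_i^k$ yields the super-multiplicative inequality $f_{k+l} \ge f_k f_l$, so Fekete's lemma upgrades the $\liminf$ in the definition of $\rho_\mu(a_i)$ to a genuine limit: $\rho_\mu(a_i) = \lim_k f_k^{1/k}$, with $f_k \le \rho_\mu(a_i)^k$ for every $k \ge 1$.

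Next I squeeze $\nu(C(a_i^k))$ between quantities involving the $f_k$'s. For the upper bound, I exploit the hypothesis on the support of $\mu$: a single step of length at most $n$ can enter the region $S_i^k := \{g \in F_m : g \text{ starts with } a_i^k\}$ only by continuing along the $a_i$-axis $A_i := \{a_i^j : j \in \mathbb{Z}\}$, since right-multiplication by an admissible letter does not alter the first syllable of a non-axis element. Hence for $k > n$, the walker's first entry to $S_i^k$ must occur at a vertex $a_i^j$ with $k \le j \le k+n-1$, and since $X_\infty \in C(a_i^k)$ forces such an entry,
\[
\nu(C(a_i^k)) \;\le\; \sum_{j=k}^{k+n-1} f_j \;\le\; n\,\rho_\mu(a_i)^k.
\]
For the lower bound, the same structural observation forces the first-syllable $a_i$-exponent $\pi_i(X_n)$ to change only at the instants when the walker is simultaneously at a vertex of $A_i$ and takes an $a_i$-step. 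Recording the successive values $V_\ell$ of $\pi_i$ at these change-times produces a random walk on $\mathbb{Z}$ with i.i.d.\ bounded symmetric increments (distributed as $\mu$ restricted to $A_i$ and renormalised), stopped after a geometric, independent number $L$ of steps. Since $V_L = \pi_i(X_\infty)$,
\[
\nu(C(a_i^v)) \;=\; \mathbb{P}(V_L \ge v), \qquad v \ge 1,
\]
and the symmetry of $\mu$ under $a_i \leftrightarrow a_i^{-1}$ forces $V_L$ to be symmetric about $0$. A generating-function computation based on the identity $\mathbb{E}[e^{\lambda V_L}] = (1-\beta)/(1-\beta \phi(\lambda))$ (with $\phi$ the MGF of the increments and $\beta$ the geometric parameter) should then yield a matching lower bound of the form $\nu(C(a_i^k)) \ge c\,(\nu_i/(1-\nu_i))^k$ for some $c > 0$ independent of $k$. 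Combining with the upper bound and taking $k$-th roots as $k \to \infty$ gives $\rho_\mu(a_i) \ge \nu_i/(1-\nu_i)$.

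I expect the lower bound in the previous step to be the main obstacle. The reduction to the one-dimensional process $V_L$ and the MGF computation are routine, but pinning the exponential rate down to exactly $\nu_i/(1-\nu_i)$---rather than some other function of $\beta$ and $\phi$---requires genuinely using the symmetry of $\mu$ to express $\nu_i$ itself in terms of $V_L$'s distribution (in particular the identity $\mathbb{P}(V_L = 0) = 1 - 2\nu_i$ forced by the reflection symmetry). The nearest-neighbour identity $\nu(C(a_i^k)) = (1-\nu_i)(\nu_i/(1-\nu_i))^k$ that holds when $n=1$ serves as the sanity check that the computation specialises correctly and recovers the sharp equality in that case.
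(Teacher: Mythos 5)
Your overall strategy is different from the paper's and contains several sound ideas, but it has a genuine gap exactly at the step you yourself flag as ``the main obstacle,'' and that step is in fact the mathematical heart of the theorem.

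What is correct and nice in your plan: the super-multiplicativity $f_{k+l}\ge f_k f_l$ and Fekete's lemma giving $\rho_\mu(a_i)=\lim f_k^{1/k}=\sup_k f_k^{1/k}$, hence $f_k\le\rho_\mu(a_i)^k$; the structural observation that the $a_i$-prefix exponent can only change while the walk is on the axis $A_i$; and the resulting upper bound $\nu(C(a_i^k))\le\sum_{j=k}^{k+n-1}f_j\le n\,\rho_\mu(a_i)^k$. (In the paper's language, $\{a_i^k,\dots,a_i^{k+n-1}\}$ is an $a_i^k$-barrier, so this is the same observation phrased without the barrier vocabulary.) The reduction of $\pi_i(X_\infty)$ to a one-dimensional walk $V_L$ on $\mathbb{Z}$ with i.i.d.\ increments stopped at an independent geometric time is also correct, including the identity $\nu(C(a_i^v))=\mathbb{P}(V_L\ge v)$ and the reflection symmetry $\mathbb{P}(V_L=0)=1-2\nu_i$. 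This reduction is genuinely different from the paper, which stays inside the barrier framework on $F_m$; your version is arguably cleaner conceptually.

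The gap: you never actually establish the lower bound $\nu(C(a_i^k))\ge c\,(\nu_i/(1-\nu_i))^k$. Unwinding your setup, the generating function $\psi(z)=\mathbb{E}[z^{V_L}]=(1-\beta)/(1-\beta\phi(z))$ has a smallest pole $\lambda>1$, and the decay rate is $\nu(C(a_i^k))\asymp\lambda^{-k}$; combined with your upper bound this correctly identifies $\rho_\mu(a_i)=1/\lambda$. But the theorem is then equivalent to $1/\lambda\ge\nu_i/(1-\nu_i)$, i.e.\ to $\mathbb{P}(V_L=0)\ge(\lambda-1)/(\lambda+1)$, and this is precisely the inequality you have not proved. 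It is not a ``routine MGF computation'': the obvious estimate $\mathbb{P}(V_L=0)\ge 1-\beta$ (from $\psi(e^{i\theta})\ge 1-\beta$) reduces to $2\phi(\lambda)\ge\lambda+1$, which already fails when the increment distribution is concentrated at $\pm 1$, so a finer argument using the actual structure of $\phi$ is required. Your $n=1$ sanity check works out to equality, but that is exactly the nearest-neighbour case where no inequality is needed; for $n\ge 2$ you still owe a proof. The paper proves exactly this missing inequality (in the form $\nu_1/(1-\nu_1)\le\lambda_1$ for the Perron root of the companion matrix built from the passage probabilities $p_k=F_\mu(e,a_i^k;B\setminus\{a_i^k\})$) as a standalone, fairly delicate linear-algebra statement (Lemma~\ref{perron-frob n-dim} together with Corollary~\ref{important corollary}), using the recursion \eqref{system of equations 1} among the cylinder masses $\nu(C(a_i^k))$, $1\le k\le n$, and a positive-span argument in $\mathbb{R}^2$. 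To close the gap in your route you would need an analogue of that lemma expressed in terms of $\beta$ and $\phi$; until then the proposal is a correct reformulation plus an unproved claim rather than a proof.
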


\textbf{Remark.} It is a well-known fact, established by Ancona in \cite{10.1007/BFb0103341}, that for any $x, y, z$ on a geodesic segment in the Cayley graph of a hyperbolic group we have 
\[
F_\mu(x, y)F_\mu(y, z) \le F_\mu(x,z) \le C F_\mu(x, y)F_\mu(y, z)
\]
for a constant $C$ which does not depend on the choice of $x,y,z$. It implies the exponential decay of the Green/first-passage function. However, despite various improvements of this inequality for free and hyperbolic groups in \cite{10.2307/2244712}, \cite{Ledrappier2001SomeAP}, and \cite{ASENS_2013_4_46_1_131_0}, the existing methods do not provide the explicit dependence of $C$ on measures $\mu$ and $\nu$.

\begin{corollary}
	\label{main corollary}
	Let $\Gamma = \{ t_1, \dots, t_{2m} \}$ be a Fuchsian group with a centrally symmetric fundamental polygon equipped with side-pairing transformations identifying the opposite sides. Consider an admissible random walk on $\Gamma$ generated by a probability measure $\mu'$ supported on $\{ t_i^j \}_{\substack{1 \le i \le 2m \\ 1 \le j \le n}}$ for some $n \ge 1$. Then $(\Gamma, \mu')$ satisfies Conjecture \ref{Fuchsian singularity conjecture}.
\end{corollary}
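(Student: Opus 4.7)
The plan is to deduce Corollary \ref{main corollary} from Proposition \ref{L:reduction to free groups} by constructing a natural lift of $\mu'$ to a measure $\mu$ on $F_m$ and then using Theorem \ref{one geodesic} to verify the key inequality \eqref{conjecture with spectral radius}.

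First I would define the lift $\mu$ on $F_m = \langle a_1, \ldots, a_m \rangle$ by setting $\mu(a_i^j) := \mu'(t_i^j)$ and $\mu(a_i^{-j}) := \mu'(t_{m+i}^j)$ for $1 \le i \le m$ and $1 \le j \le n$; using $t_{m+i} = t_i^{-1}$ this gives $p_*(\mu) = \mu'$, and the support of $\mu$ is exactly $\{a_i^j\}_{1 \le i \le m,\, 1 \le |j| \le n}$, which is the shape required by Theorem \ref{one geodesic}. The admissibility of $\mu$ is inherited from that of $\mu'$, and the symmetry hypothesis of Theorem \ref{one geodesic} corresponds to symmetry of $\mu'$ on $\Gamma$ under the identification $t_{m+i} = t_i^{-1}$.

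Applying Theorem \ref{one geodesic} to this lift then gives, for each $1 \le i \le m$,
\[
\rho_\mu(a_i^{\pm 1}) \ge \frac{\nu(C(a_i^{\pm 1}))}{1 - \nu(C(a_i^{\pm 1}))},
\]
which rearranges via the identity $\tfrac{x}{1+x} = \tfrac{1}{1+x^{-1}}$ into
\[
\frac{1}{1 + \rho_\mu(a_i^{\pm 1})^{-1}} \ge \nu(C(a_i^{\pm 1})).
\]
Summing these bounds over $1 \le i \le m$ and both signs yields
\[
\sum_{i=1}^m \left( \frac{1}{1 + \rho_\mu(a_i)^{-1}} + \frac{1}{1 + \rho_\mu(a_i^{-1})^{-1}} \right) \ge \sum_{i=1}^m \bigl( \nu(C(a_i)) + \nu(C(a_i^{-1})) \bigr) = 1,
\]
where the final equality holds because every infinite reduced word in $\partial F_m$ begins with a unique element of $\{a_i^{\pm 1}\}_{1 \le i \le m}$, so the $2m$ cylinder sets $C(a_i^{\pm 1})$ partition $\partial F_m$ and $\nu$ is a probability measure on this boundary. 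This is exactly \eqref{conjecture with spectral radius}, so Proposition \ref{L:reduction to free groups} produces singularity of $\nu'_{\mu'}$.

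The bulk of the difficulty is entirely absorbed by Theorem \ref{one geodesic}: once the first-passage lower bound \eqref{E:lower bounds for fp function} in terms of hitting masses of cylinder sets is available, the algebraic form of condition \eqref{conjecture with spectral radius} is precisely calibrated so that summing over the first-letter partition of $\partial F_m$ automatically produces the required lower bound of $1$. The only residual verification is that the lift $\mu$ is well-defined and meets all technical hypotheses of Theorem \ref{one geodesic} (symmetry, admissibility, and support of the prescribed form), which is immediate from the construction and from the relation $t_{m+i} = t_i^{-1}$.
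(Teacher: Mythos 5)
Your proof is correct and follows essentially the same route as the paper's: lift $\mu'$ to a measure $\mu$ on $F_m$ supported on $\{a_i^j\}_{1\le i\le m,\,1\le|j|\le n}$, apply Theorem \ref{one geodesic} to bound $\rho_\mu(a_i^{\pm1})$ from below by $\nu(C(a_i^{\pm1}))/(1-\nu(C(a_i^{\pm1})))$, sum over the first-letter partition of $\partial F_m$ to obtain \eqref{conjecture with spectral radius}, and invoke Proposition \ref{L:reduction to free groups}. The only cosmetic difference is that the paper uses the symmetry $\nu(C(a_i))=\nu(C(a_i^{-1}))$ to write the total as twice the sum over positive generators, whereas you sum over both signs directly.
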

\begin{proof}
	Choose $\mu$ on $F_m$ in such a way, that $\mu(a_i^j) := \mu'(t_i^j)$ for all $1 \le i \le m$, $1 \le j \le n$. We just need to prove that $(\Gamma, \mu')$ satisfies the conditions of Proposition \ref{L:reduction to free groups}, in particular, that the projection $\mu$ of $\mu'$ satisfies \eqref{conjecture with spectral radius}. However, this follows from Theorem \ref{one geodesic}, as
	\[
	\rho_{\mu}(a_i) \ge \dfrac{\nu(C(a_i)}{1 - \nu(C(a_i))} \Rightarrow \dfrac{1}{1 + \rho_{\mu}(a_i)^{-1}} \ge \nu(C(a_i)),
	\]
	so
	\[
	\sum\limits_i \frac{1}{1 + \rho_{\mu}(a_i)^{-1}} + \frac{1}{1 + \rho_{\mu}(a_i^{-1})^{-1}} = 2 \left(  \sum\limits_i \frac{1}{1 + \rho_{\mu}(a_i)^{-1}} \right)  \ge 2 \left( \sum\limits_i \nu(C(a_i)) \right)  = 1,
	\]
	as the cylinder sets $C(a_i)$ are disjoint for all $1 \le i \le m$, and $\nu(C(a_i)) = \nu(C(a_i^{-1}))$.
\end{proof}

To prove Theorem \ref{one geodesic}, we introduce the notions of barriers (Definition \ref{barrier definition}) and strong barriers (Definition \ref{strong barrier definition}) in order to establish linear relations between the measures of certain cylinder sets. It turns out that $\rho_{\mu}(a_i)$ can be realized as the spectral radius of a certain irreducible matrix which corresponds to mentioned relations, and that allows us to make use of the Perron-Frobenius theorem. The proofs themselves are given in the end of Section \ref{section3}.

\textbf{Remark.} We would like to remark that the idea of exploiting barriers to characterize the behaviour of random walks at infinity for hyperbolic groups goes back to the works of Derriennic (\cite{derriennic1975marche}), Lalley, and Gou\"ezel (see \cite{10.2307/2244712}, \cite{ASENS_2013_4_46_1_131_0}).

Unfortunately, there is a wide class of finite-range random walks for which \eqref{conjecture with spectral radius} does not hold at all, which does not allow us to fully settle Conjecture \ref{Fuchsian singularity conjecture}. Let us consider an automorphism $g \mapsto \hat{g}$ of $F_m$ which is uniquely defined by $\hat{a_i} = a_i^{-1}$. Random walks generated by measures $\mu$ satisfying $\mu(g) = \mu(\hat{g})$ for every $g \in F_m$ will be called \textbf{antisymmetric}.

\begin{theorem}
	\label{disproof}
	Consider an antisymmetric random walk on $F_m = \left\langle a_i^{\pm 1} \right\rangle$ generated by an admissible measure $\mu$ satisfying the following condition: there exists $1 \le i \le m$ such that the singleton $\{a_j\}$ is an $a_j$-barrier for $j \ne i$ and $\{a_i\}$ is an $a_i^2$-barrier (see Definition \ref{barrier definition}).
	
   Then for each $j \ne i$ we have
	\[
	\rho_{\mu}(a_j) = F_{\mu}(e, a_j) = \dfrac{\nu(C(a_j))}{1 - \nu(C(a_j))},
	\]
	and
	\[
	\rho_{\mu}(a_i) = F_{\mu}(e, a_i) \le \dfrac{\nu(C(a_i))}{1 - \nu(C(a_i))}.
	\]
	Finally, we have
	\[
	F_{\mu}(e, a_i) = \dfrac{\nu(C(a_i))}{1 - \nu(C(a_i))}
	\]
	if and only if $\{a_i\}$ is an $a_i$-barrier.
\end{theorem}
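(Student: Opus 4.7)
The plan is to combine the strong Markov property applied at the first hitting time of $a_j$ or $a_i$ with the identity $\nu_{a_k}(C(a_k)) = 1 - \nu(C(a_k))$, which follows from antisymmetry. I would first verify this identity: since the walk started from $a_k$ is just $a_k$ times an independent walk from $e$, its hitting measure is the push-forward $(a_k)_* \nu$. For $\xi \in \partial F_m$, the boundary point $a_k \xi$ starts with $a_k$ precisely when $\xi$ does not start with $a_k^{-1}$, so $\nu_{a_k}(C(a_k)) = 1 - \nu(C(a_k^{-1}))$; antisymmetry gives $\nu(C(a_k^{-1})) = \nu(C(a_k))$, whence the identity.

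For $j \ne i$, the $a_j$-barrier forces every trajectory converging to $\partial C(a_j)$ to visit $a_j$ at some finite time. Strong Markov applied at the first hitting time of $a_j$ then yields
\begin{equation*}
\nu(C(a_j)) = F_\mu(e, a_j)\, \nu_{a_j}(C(a_j)) = F_\mu(e, a_j)\bigl(1 - \nu(C(a_j))\bigr),
\end{equation*}
and solving for $F_\mu(e, a_j)$ gives the stated identity. The same barrier applied to the finite target $a_j^k$ yields $F_\mu(e, a_j^k) = F_\mu(e, a_j)\, F_\mu(e, a_j^{k-1})$, hence $F_\mu(e, a_j^k) = F_\mu(e, a_j)^k$ upon iteration, so $\rho_\mu(a_j) = F_\mu(e, a_j)$.

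For the special index $i$, the $a_i^2$-barrier still forces every trajectory from $e$ to $a_i^k$ ($k \ge 2$) through $a_i$, since $a_i^k$ lies in the subtree rooted at $a_i^2$; the same iteration gives $F_\mu(e, a_i^k) = F_\mu(e, a_i)^k$ and $\rho_\mu(a_i) = F_\mu(e, a_i)$. At the level of boundary trajectories, however, the $a_i^2$-barrier does not control every convergent trajectory to $\partial C(a_i)$: a trajectory may enter $C(a_i) \setminus C(a_i^2)$ at an element such as $a_i a_k^{\pm 1}$ with $k \ne i$ in a single $\mu$-step, without ever visiting $a_i$. The decomposition then degrades to the inequality
\begin{equation*}
\nu(C(a_i)) \ge F_\mu(e, a_i)\, \nu_{a_i}(C(a_i)) = F_\mu(e, a_i)\bigl(1 - \nu(C(a_i))\bigr),
\end{equation*}
yielding $F_\mu(e, a_i) \le \nu(C(a_i))/(1 - \nu(C(a_i)))$. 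Equality characterizes precisely the case when no such exceptional trajectory exists, which is the condition that $\{a_i\}$ is an $a_i$-barrier.

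The main obstacle will be carefully tracking the regimes in which the strong-Markov decomposition is an equality versus an inequality, and confirming that the $a_i^2$-barrier is strong enough to control all passages through the elements $a_i^k$ (for the spectral radius) but not strong enough to control all convergent trajectories to the full boundary cylinder $\partial C(a_i)$ (for the hitting measure); the gap between these two regimes is exactly what produces the strict inequality in the generic case and its saturation in the presence of the stronger $a_i$-barrier.
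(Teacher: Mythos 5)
Your argument for $j\ne i$ is correct and matches the paper's in spirit: taking the singleton barrier $\{a_j\}$ in Proposition \ref{corollary1} gives $\nu(C(a_j)) = F_\mu(e,a_j)\bigl(1-\nu(C(a_j^{-1}))\bigr)$, and antisymmetry turns this into the stated formula; your ``strong Markov at $\tau_{a_j}$'' phrasing and your verification that $(a_k)_*\nu(C(a_k))=1-\nu(C(a_k^{-1}))$ are exactly this, stated in probabilistic rather than combinatorial terms. Your derivation of $\rho_\mu(a_j)=F_\mu(e,a_j)$ from $F_\mu(e,a_j^k)=F_\mu(e,a_j)^k$ via Lemma \ref{lemma1} (using the $a_j$- or $a_i^2$-barrier) also matches the paper. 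The $\ge$ inequality for the index $i$ via the decomposition $\nu(C(a_i)) = F_\mu(e,a_i)\nu_{a_i}(C(a_i)) + \mathbb{P}(X_\infty\in\partial C(a_i),\,\tau_{a_i}=\infty)$ is likewise correct.

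Where there is a genuine gap is in the final ``if and only if.'' You reduce equality to ``no exceptional trajectory exists'' and then assert that this ``is the condition that $\{a_i\}$ is an $a_i$-barrier,'' flagging this as the main obstacle rather than resolving it. The direction ``barrier $\Rightarrow$ no exceptional trajectory'' is immediate, but the converse is not: the barrier condition is a statement about finite paths to elements of $C_{fin}(a_i)$, whereas the exceptional event concerns infinite trajectories converging to $\partial C(a_i)$ while avoiding $a_i$. Knowing a positive-probability finite path $e\to h$ avoiding $a_i$ for some $h\in C_{fin}(a_i)$ does not, by itself, produce a positive-probability boundary trajectory of the required type; one must argue that from such an $h$ the walk reaches $\partial C(a_i)$ with positive probability while still avoiding $a_i$, and that step is not supplied. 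The paper closes this gap differently: it applies Proposition \ref{corollary1} with a (minimal) $a_i$-barrier $B\subset C_{fin}(a_i)$, obtaining $\nu(C(a_i))=\sum_{b\in B}F_\mu(e,b;B\setminus\{b\})(1-\nu(C(b^{-1})))$, then uses Lemma \ref{lemma1} and antisymmetry to show $\nu(C(b^{-1}))<\nu(C(a_i))$ strictly for every $b\ne a_i$. Since $F_\mu(e,a_i)\le\sum_{b\in B}F_\mu(e,b;B\setminus\{b\})$, this yields strict inequality whenever $B\ne\{a_i\}$ (equivalently, whenever $\{a_i\}$ is not a barrier), while $B=\{a_i\}$ gives equality outright. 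This purely combinatorial/linear argument bypasses the need to analyze the probability of the boundary exceptional event directly, and is the part you would need to add to make the proposal complete.
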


As a quick corollary, we get the following application:

\begin{corollary}
	For any antisymmetric random walk on $F_m$ supported on the set $\{ a_i, a_i^{-1}, a_1 a_2, a_1^{-1} a_2^{-1} \}_{1 \le i \le m}$ we get
	\[
	\rho_{\mu}(a_1) = F_{\mu}(e, a_1) < \dfrac{\nu(C(a_1))}{1 - \nu(C(a_1))},
	\]
	and
	\[
	\rho_{\mu}(a_j) = F_{\mu}(e, a_j) = \dfrac{\nu(C(a_j))}{1 - \nu(C(a_j))}
	\]
	for all $j > 1$. In particular, such random walks do not satisfy \eqref{conjecture with spectral radius}.
\end{corollary}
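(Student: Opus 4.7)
The plan is to verify the hypotheses of Theorem \ref{disproof} with $i = 1$, invoke the theorem to obtain the claimed equalities and the strict inequality, and then deduce the failure of \eqref{conjecture with spectral radius} by a short summation argument exploiting antisymmetry.

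To establish the barrier conditions, I would perform a case analysis on pairs $(w, g)$ with $w$ outside and $wg$ inside the relevant cone, exploiting that the support $S = \{a_i^{\pm 1}\}_{1 \le i \le m} \cup \{a_1 a_2, a_2^{-1} a_1^{-1}\}$ consists of elements of length at most two, so right-multiplication by $g$ cancels at most two letters of $w$. For $j \ge 3$, no element of $S$ other than $a_j^{\pm 1}$ involves the letter $a_j$, so the only entry into the $a_j$-cone is $e \xrightarrow{a_j} a_j$, giving the $a_j$-barrier property for $\{a_j\}$. For $j = 2$, the entries into the $a_2$-cone are exactly $e \xrightarrow{a_2} a_2$ and $a_1^{-1} \xrightarrow{a_1 a_2} a_2$, both landing at $a_2$, so $\{a_2\}$ is an $a_2$-barrier. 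For $j = 1$, however, the step $a_1 a_2$ from $e$ enters the $a_1$-cone at $a_1 a_2 \ne a_1$, so $\{a_1\}$ is \emph{not} an $a_1$-barrier. For the $a_1^2$-cone, the analogous case analysis shows that the only possible predecessor one step before entering the cone is $w = a_1$ itself (via $a_1 \xrightarrow{a_1} a_1^2$ or $a_1 \xrightarrow{a_1 a_2} a_1^2 a_2$), so every path from $e$ into the $a_1^2$-cone must pass through $a_1$, confirming that $\{a_1\}$ is an $a_1^2$-barrier. The most delicate part is this last verification: one must rule out paths that enter the $a_1$-cone at $a_1 a_2$ and somehow progress to the $a_1^2$-cone without ever visiting $a_1$, but the length bound on $S$ reduces the analysis to a finite check of cancellations.

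With the barrier hypotheses verified, Theorem \ref{disproof} yields $\rho_\mu(a_j) = F_\mu(e, a_j) = \nu(C(a_j))/(1 - \nu(C(a_j)))$ for each $j \ne 1$, and $\rho_\mu(a_1) = F_\mu(e, a_1) < \nu(C(a_1))/(1 - \nu(C(a_1)))$, the latter strict because $\{a_1\}$ fails to be an $a_1$-barrier. Antisymmetry of $\mu$ gives $\rho_\mu(a_i) = \rho_\mu(a_i^{-1})$ and $\nu(C(a_i)) = \nu(C(a_i^{-1}))$ for all $i$, and combining these with the identity $\frac{1}{1 + \rho_\mu(a_i)^{-1}} = \frac{\rho_\mu(a_i)}{1 + \rho_\mu(a_i)}$ and summing produces
\[
\sum_{i=1}^{m} \frac{1}{1 + \rho_\mu(a_i)^{-1}} + \frac{1}{1 + \rho_\mu(a_i^{-1})^{-1}} < 2 \sum_{i=1}^{m} \nu(C(a_i)) = 1,
\]
where the last equality uses that $\{C(a_i^{\pm 1})\}_{1 \le i \le m}$ partitions $\partial F_m$. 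Hence \eqref{conjecture with spectral radius} fails strictly for such random walks.
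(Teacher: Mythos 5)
Your proof is correct and follows essentially the same route as the paper's own Example~\ref{example3.1}: verify the barrier hypotheses of Theorem~\ref{disproof} with $i=1$, invoke that theorem (using the ``if and only if'' clause together with the observation that $e \to a_1 a_2$ enters $C_{fin}(a_1)$ off the singleton $\{a_1\}$ to get strict inequality), and close with the disjointness/antisymmetry summation argument. The only difference is presentational: you spell out the cancellation case analysis for the barrier checks, whereas the paper states them as ``easy to see.''
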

We provide the proof in Example \ref{example3.1}. Finally, we show that there are symmetric examples which do not satisfy \eqref{conjecture with spectral radius} as well, see Example \ref{example3.2}.

\textbf{Acknowledgments.} We would like to thank Giulio Tiozzo, Kunal Chawla, and Steven Lalley for significantly improving the structure and overall presentation of the preprint, and Alexander Kalmynin for suggestions that eventually led to the proof of Lemma \ref{perron-frob n-dim}.
\section{Prerequisites}

Throughout this paper we consider (transient and irreducible) random walks $(X_n)_{n  \ge 0}$ on free groups $F_m = \{ a_1^{\pm 1}, \dots, a_m^{\pm 1} \}$ generated by admissible probability measures $\mu$, that is, having the support generate $F_m$ as a semigroup. 

\subsection{Notation}

Let $x, y \in F_m$. Recall the definitions of Green function and the first-passage functions:

\begin{align*}
	G_\mu(x, y) &= \sum_{k=0}^{\infty} \mathbb{P}(X_0 = x, X_k = y) & \qquad & \text{(Green function)} \\
	F_\mu(x, y) = \sum_{k=0}^{\infty} \mathbb{P}(X_0 &= x, X_k = y, X_j \ne y \text{ for } j < k) & \quad & \text{(First-passage function)}
\end{align*}

What is known in our case is that both functions are always finite, this is equivalent to the transience of the random walks considered. Also, it is not hard to show that $F_\mu(x, y) G_\mu(e, e) = G_\mu(x, y)$, therefore, it does not make a difference whether to consider the Green or first-passage function. We will be working with $F_\mu$ and $d_\mu(x, y) = - \log(F_\mu(x, y))$, which is often referred to as the \textbf{Green distance}.

In particular, for admissible random walks on $F_m$, almost every sample path converges to the boundary $\partial F_m$, and the respective pushforward measure is referred to as the \textbf{hitting measure}, and we will denote it by $\nu_{\mu}$, or by $\nu$ for brevity.

Let $g \in F_m$ be an element with the reduced representation $g = w_1 \dots w_n$, where $w_k \in \{ a_i^{\pm 1} \}_{1 \le i \le m}$.
\[
\begin{gathered}
	C(g) = \{ \text{geodesic rays starting from identity containing } g\} \subset \partial F_m, \\
	C_{fin}(g) = \{ gx : |gx| = |g| + |x| \} \subset F_m,
\end{gathered}
\]
where $|g| := d_w(e, g)$ denotes the word distance in $F_m$. We will call the set $C_{fin}(g)$ the $g$-\textbf{shadow}. 

We will also be using various restricted versions of the first-passage function.

Let $x, y \in F_m$, and $S_{bad} \subset F_m$, such that $y \notin S_{bad}$. Let us denote
	\[
	F_{\mu}(x, y; S_{bad}) = \sum_{k \ge 0} \mathbb{P}(X_0 = x, X_k = y, X_j \notin S_{bad} \text{ for } j < k).
	\]
	If $A \subset F_m$, and $x \notin A$, then we denote
	\[
	F_\mu(x \nrightarrow A) := 1 - \sum_{a \in A} F_\mu(x, a; A \setminus \{ a \}) = \mathbb{P}(X_0 = x, X_j \notin A \text{ for any } j >0 ).
	\]
	Finally, if $A_1, \dots, A_r \subset F_m$ are subsets which satisfy $A_i \cap A_{i+1} = \emptyset$ for all $1 \le i \le r-1$, then we inductively define
	\[
	\begin{aligned}
		F_\mu(x \rightarrow A_1 \rightarrow \dots \rightarrow A_{r-1} \nrightarrow A_r) := \sum_{a \in A_1} F_\mu(x, a; A_1 \setminus \{a\}) F_\mu(a \rightarrow A_2 \dots \rightarrow A_{r-1} \nrightarrow A_r).
	\end{aligned}
	\]
	
	We will also denote $B(x, n) = \{ y \in F_m : d_w(x, y) \le n \}$, where $d_w$ denotes the word distance in $F_m$ with respect to $\{ a_i^{\pm 1} \}_{1 \le i \le m}$.
\subsection{Establishing the barrier framework}

\begin{definition}
	\label{barrier definition}
	Let $g \in F_m \setminus \{e\}$. A subset $B \subseteq F_m$ is called a $g$\textbf{-barrier} if for any $h \in C_{fin}(g)$ we have $F_\mu(e, h; B) = 0$. 
\end{definition}

\begin{example}
	Let us denote $B(g, n) := \{ h \in F_m : |h^{-1} g| \le n \}$, and let $\text{diam}(\text{supp}(\mu)) = n$ for some $n > 0$ with no other restrictions. Then for every element $g \in F_m$ the subset $C_{fin}(g)) \cap B(e, n)$ is a $g$-barrier. 
\end{example}
\textbf{Remark.} Keep in mind that if the support has ``holes'', in other words, $\mu(g) = 0$ for some $g \in B(e, n)$, then this might not be a minimal barrier as the next example shows.

\begin{example}
	Suppose that $\text{supp}(\mu) = \{ a_i^k \}_{\substack{1 \le i \le m \\ 1 \le |k| \le n}}$. Then for every generator $a_i$ the geodesic segment $\{ a_i, a_i^2, \dots, a_i^n \}$ is a minimal $a_i$-barrier.
\end{example}

The next lemma is simple but quite important.

\begin{lemma}
	\label{lemma1}
	If $B$ is an $g$-barrier, then for every $h \in C_{fin}(g)$ we have
	\begin{equation}
			\label{green function, barriers}
			F_\mu(e, h) = \sum_{b \in B} F_\mu(e, b; B \setminus \{ b\}) F_\mu(b, h).
	\end{equation}
\end{lemma}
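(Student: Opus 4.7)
The plan is to prove the identity as a direct application of the strong Markov property, decomposing each realization of the walk from $e$ to $h$ by its first entry into the barrier $B$.

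First, I would expand $F_{\mu}(e, h)$ as a sum over all first-passage sample paths $X_0 = e, X_1, \ldots, X_k = h$ for which $h$ is reached at step $k$ for the first time. Since $h \in C_{fin}(g)$ and $B$ is a $g$-barrier, the defining hypothesis $F_{\mu}(e, h; B) = 0$ forces every such path to hit $B$ at some step $j \le k$. I would therefore introduce the stopping time $\tau = \min\{j \ge 0 : X_j \in B\}$, which is finite on the event of interest, and partition the paths according to the value $b = X_{\tau} \in B$.

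Next, I would split each such path at time $\tau$. The initial segment $X_0, \ldots, X_{\tau} = b$ is precisely a trajectory from $e$ that reaches $b$ without having previously visited $B \setminus \{b\}$, and aggregating these contributions over all finite lengths yields the factor $F_{\mu}(e, b; B \setminus \{b\})$. The strong Markov property applied at $\tau$ identifies the post-$\tau$ trajectory with an independent copy of the walk started at $b$, whose probability of reaching $h$ for the first time equals $F_{\mu}(b, h)$. Multiplying these two factors and summing over $b \in B$ produces the identity \eqref{green function, barriers}.

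The only small subtlety, which I would check at the end, is the degenerate case $h \in B$: here $\tau$ can coincide with the first visit to $h$, but the decomposition still closes because the $k = 0$ term of the definition gives $F_{\mu}(h, h) = 1$, so the $b = h$ contribution collapses to $F_{\mu}(e, h; B \setminus \{h\})$ as expected. Beyond this bookkeeping, I do not anticipate a serious obstacle; the lemma is essentially the strong Markov identity that underpins the barrier framework and will be applied repeatedly in the sequel.
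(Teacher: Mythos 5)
Your proof is correct and follows essentially the same route as the paper's: decompose first-passage paths from $e$ to $h$ by their first entry into $B$, apply the (strong) Markov property at that stopping time, and use the barrier hypothesis $F_\mu(e,h;B)=0$ to discard paths avoiding $B$. The paper writes the decomposition as $F_\mu(e,h)=\sum_{b\in B}F_\mu(e,b;(B\setminus\{b\})\cup\{h\})\,F_\mu(b,h)+F_\mu(e,h;B)$ and kills the last term; implicitly one also needs $F_\mu(e,b;(B\setminus\{b\})\cup\{h\})=F_\mu(e,b;B\setminus\{b\})$, i.e.\ that the pre-$\tau$ segment cannot have visited $h$, which again follows from the barrier hypothesis since such a segment would be a path from $e$ to $h$ avoiding $B$. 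You gloss over this same point, but it closes for the same reason, and you correctly flag the $h\in B$ degeneracy. No gap.
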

\begin{proof}
	This is just an application of the full probability formula combined with the Markov property: every path either enters $B$ before hitting $h$, or avoids $B$ altogether:
	\[
	F_\mu(e, h) = \sum_{b \in B} F_\mu(e, b; (B \setminus \{ b\}) \cup \{ h \}) F_\mu(b, h) + F_\mu(e, h; B).
	\]
	However, as $B$ is a $g$-barrier, the term $F_\mu(e, h; B)$ vanishes.
\end{proof}

\begin{proposition}
	\label{corollary1}
	Let $B$ be a $g$-barrier. Then for every $h \in C_{fin}(g)$ with the reduced representation $h_1 \dots h_l$ we have
	\begin{equation}
		\label{cylinder sets, barriers}
		\nu(C(h)) = \sum_{b \in B \cap C_{fin}(h)} F_\mu(e, b; B \setminus \{ b \}) (1 - \nu(C(b^{-1} h_1 \dots h_{l-1}))) + \sum_{b \in B \setminus C_{fin}(h)} F_\mu(e, b; B \setminus \{ b \}) \nu( C(b^{-1} h)  ).
	\end{equation}
\end{proposition}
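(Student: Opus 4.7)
The plan is to decompose $\nu(C(h))$ by conditioning on the first visit of the walk to the barrier $B$, and then identify each of the resulting boundary measures on the $b$-shifted walk by translation invariance, treating separately the cases where $b$ lies past $h$ and where it does not.

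First I would verify that, on the event $\{X_\infty \in C(h)\}$, the first hitting time $\tau_B := \inf\{n \ge 0 : X_n \in B\}$ is almost surely finite. Since $C(h) \subseteq C(g)$, convergence of $X_n$ to a point of $C(h)$ forces the walk eventually to enter $C_{fin}(g)$; letting $\tau$ be the first entry time into $C_{fin}(g)$, the barrier property $F_\mu(e, h'; B) = 0$ for $h' \in C_{fin}(g)$, summed over the possible values of $X_\tau$, yields $\mathbb{P}(\tau < \infty,\, X_j \notin B \text{ for all } j < \tau) = 0$. Hence $\tau_B \le \tau$ almost surely on the event. With this in hand, the strong Markov property applied at $\tau_B$, together with translation invariance $\mathbb{P}_b(X_\infty \in A) = \nu(b^{-1} A)$, yields
\[
\nu(C(h)) = \sum_{b \in B} F_\mu(e, b; B \setminus \{b\}) \cdot \nu(b^{-1} C(h)).
\]

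It remains to evaluate $\nu(b^{-1} C(h))$ in the two cases. If $b \in C_{fin}(h)$, I write $b = h u$ in reduced form and, by a direct inspection of the cancellations in the product $b\xi = h u \xi$ for $\xi \in \partial F_m$, argue that $b\xi \in C(h)$ fails precisely when $\xi$ begins with the reduced word $u^{-1} h_l^{-1}$. Consequently $b^{-1} C(h) = \partial F_m \setminus C(u^{-1} h_l^{-1})$, and since a short computation shows that $b^{-1} h_1 \cdots h_{l-1}$ reduces to exactly $u^{-1} h_l^{-1}$, this produces the complement term $1 - \nu(C(b^{-1} h_1 \cdots h_{l-1}))$. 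In the complementary case $b \notin C_{fin}(h)$, $h$ is not a prefix of $b$, so the cancellation in the reduced product $b^{-1} \cdot h$ stops before exhausting $h$; the reduced form of $b^{-1} h$ therefore still ends in $h_l$, the non-cancellation condition defining $C(h)$ matches the one defining $C(b^{-1} h)$, and we obtain $\nu(b^{-1} C(h)) = \nu(C(b^{-1} h))$ directly.

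The main technical obstacle is the cancellation bookkeeping in the first case: one must be careful to verify that $b^{-1} C(h)$ is genuinely the complement of a single cylinder rather than a more complicated set, and identify that cylinder with $C(b^{-1} h_1 \cdots h_{l-1})$ after reduction. Once this is settled, splitting the sum according to $b \in B \cap C_{fin}(h)$ and $b \in B \setminus C_{fin}(h)$ produces the claimed identity.
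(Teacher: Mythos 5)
Your proof is correct, but it takes a genuinely different route from the paper's. The paper decomposes $\nu(C(h))$ as an infinite series over the number of ``returns'' between the entry set $S_{ent} = C_{fin}(h)\cap B(h,n)$ and the exit set $S_{exit}$, applies Lemma~\ref{lemma1} termwise to peel off the barrier $B$, splits the sum according to whether $b\in C_{fin}(h)$, and then resums the two series to recognize $\nu(C(b^{-1}h))$ and $1-\nu(C(b^{-1}h_1\dots h_{l-1}))$. You instead condition exactly once, at the first hitting time $\tau_B$, and invoke the strong Markov property together with translation invariance $\mathbb{P}_b(X_\infty\in C(h))=\nu(b^{-1}C(h))$; the only extra work is your a priori observation that $\tau_B<\infty$ almost surely on $\{X_\infty\in C(h)\}$, which you deduce correctly by noting that $X_n$ eventually enters $C_{fin}(g)$ and that the barrier property forces a prior visit to $B$. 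This is shorter and avoids the bookkeeping with $S_{ent}$ and $S_{exit}$ entirely. Your case analysis of $\nu(b^{-1}C(h))$ matches the paper's and is sound: for $b=hu\in C_{fin}(h)$, the cancellation reaches $h_l$ precisely when $\xi$ begins with $u^{-1}h_l^{-1}=b^{-1}h_1\cdots h_{l-1}$, giving the complement of a single cylinder; for $b\notin C_{fin}(h)$ the identification $b^{-1}C(h)=C(b^{-1}h)$ is exactly the boundary analogue of Lemma~\ref{shadow3}, which you re-derive. One small remark: both your argument and the paper's tacitly read $F_\mu(e,b;B\setminus\{b\})$ as the first-passage probability $\mathbb{P}(\tau_B<\infty,\,X_{\tau_B}=b)$, i.e.\ the walk is also required not to revisit $b$ before $\tau_B$; this is what makes $\sum_{b\in B}F_\mu(e,b;B\setminus\{b\})$ equal the probability of ever hitting $B$, and it is the intended reading even though the displayed definition of $F_\mu(x,y;S_{bad})$ omits the clause $X_j\neq y$.
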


\begin{proof}
	First of all, recall that there exist sets $S_{ent} = C_{fin}(h) \cap B(h, n)$ and $S_{exit} = (F_m \setminus C_{fin}(h)) \cap B(h, n)$ such that
	\[
	\nu(C(h)) = \sum_{k=0}^{\infty} F_\mu(\underbrace{e \rightarrow S_{ent} \rightarrow S_{exit} \rightarrow \dots \rightarrow S_{ent} \nrightarrow S_{exit}}_{k \text{ returns}}).
	\]
	This holds because $S_{ent}$, as defined, is a $h$-barrier. As $S_{ent} \subset C_{fin}(h) \subset C_{fin}(g)$, we can use the barrier property of $B$ and Lemma \ref{lemma1} to write
	\[
	\begin{aligned}
		& F_\mu(\underbrace{e \rightarrow S_{ent} \rightarrow S_{exit} \rightarrow \dots \rightarrow S_{ent} \nrightarrow S_{exit}}_{k \text{ returns}}) = \sum_{b \in B} F_\mu(e, b; B \setminus \{ b \}) F_\mu(\underbrace{b \rightarrow S_{ent} \rightarrow \dots \nrightarrow S_{exit}}_{k \text{ returns}}).
	\end{aligned}
	\]
	We have to treat the term $F_\mu(\underbrace{b \rightarrow S_{ent} \rightarrow \dots \nrightarrow S_{exit}}_{k \text{ returns}})$ carefully, though: what happens if $b \in B \cap S_{ent}$? Taking this into account, we rewrite the above sum as follows:
	\[
	\begin{aligned}
		F_\mu(\underbrace{e \rightarrow S_{ent} \rightarrow S_{exit} \rightarrow \dots \rightarrow S_{ent} \nrightarrow S_{exit}}_{k \text{ returns}}) &= \sum_{b \in B \setminus C_{fin}(g)} F_\mu(e, b; B \setminus \{ b \}) F_\mu(\underbrace{b \rightarrow S_{ent} \rightarrow \dots \nrightarrow S_{exit}}_{k \text{ returns}}) + \\ &+\sum_{b \in B \cap C_{fin}(h)} F_\mu(e, b; B \setminus \{ b \}) F_\mu(\underbrace{b \rightarrow S_{exit} \rightarrow \dots \nrightarrow S_{exit}}_{k \text{ returns}}).
	\end{aligned}
	\]
	Now take the sum of both sides over $k$, so we get
	\begin{equation}
		\begin{aligned}
			\sum_{k \ge 0} F_\mu(\underbrace{e \rightarrow S_{ent} \rightarrow S_{exit} \rightarrow \dots \rightarrow S_{ent} \nrightarrow S_{exit}}_{k \text{ returns}}) &= \sum_{k \ge 0} \sum_{b \in B \setminus C_{fin}(h)} F_\mu(e, b; B \setminus \{ b \}) F_\mu(\underbrace{b \rightarrow S_{ent} \rightarrow \dots \nrightarrow S_{exit}}_{k \text{ returns}}) + \\ &+ \sum_{k \ge 0} \sum_{b \in B \cap C_{fin}(h)} F_\mu(e, b; B \setminus \{ b \}) F_\mu(\underbrace{b \rightarrow S_{exit} \rightarrow \dots \nrightarrow S_{exit}}_{k \text{ returns}}).
		\end{aligned}
	\end{equation}
	Let us treat both (double) sums on RHS separately. For $b \in B \setminus C_{fin}(h)$ we immediately get
	\[
	\begin{aligned}
		& \sum_{k \ge 0} \sum_{b \in B \setminus C_{fin}(h)} F_\mu(e, b; B \setminus \{ b \}) F_\mu(\underbrace{b \rightarrow S_{ent} \rightarrow \dots \nrightarrow S_{exit}}_{k \text{ returns}}) = \\ &= \sum_{b \in B \setminus C_{fin}(h)} F_\mu(e, b; B \setminus \{ b \}) \sum_{k \ge 0} F_\mu(\underbrace{b \rightarrow S_{ent} \rightarrow \dots \nrightarrow S_{exit}}_{k \text{ returns}}) = \\ & = \sum_{b \in B \setminus C_{fin}(h)} F_\mu(e, b; B \setminus \{ b \}) \sum_{k \ge 0} F_\mu(\underbrace{e \rightarrow b^{-1} S_{ent} \rightarrow \dots \nrightarrow b^{-1} S_{exit}}_{k \text{ returns}}) = \\ &= \sum_{b \in B \setminus C_{fin}(h)} F_\mu(e, b; B \setminus \{ b \}) \nu(C(b^{-1} h)),
	\end{aligned}
	\]
	as $b^{-1} S_{ent}$ is a $b^{-1} h$-barrier.
	
	It is a bit trickier if $b \in B \cap C_{fin}(h)$, but the idea is rather similar:
	\[
	\begin{aligned}
		& \sum_{k \ge 0} \sum_{b \in B \cap C_{fin}(h)} F_\mu(e, b; B \setminus \{ b \}) F_\mu(\underbrace{b \rightarrow S_{exit} \rightarrow \dots \nrightarrow S_{exit}}_{k \text{ returns}}) = \\ &= \sum_{b \in B \cap C_{fin}(h)} F_\mu(e, b; B \setminus \{ b \}) \sum_{k \ge 0} F_\mu(\underbrace{b \rightarrow S_{exit} \rightarrow \dots \nrightarrow S_{exit}}_{k \text{ returns}}) = \\ & = \sum_{b \in B \cap C_{fin}(h)} F_\mu(e, b; B \setminus \{ b \}) \sum_{k \ge 0} F_\mu(\underbrace{e \rightarrow b^{-1} S_{exit} \rightarrow \dots \nrightarrow b^{-1} S_{exit}}_{k \text{ returns}}) = \\ &= \sum_{b \in B \cap C_{fin}(h)} F_\mu(e, b; B \setminus \{ b \}) (1 - \nu(C(b^{-1} h_1 \dots h_{l-1}))),
	\end{aligned}
	\]
	as $b^{-1} S_{exit} = (F_m \setminus C_{fin}(b^{-1} h)) \cap B(b^{-1} h, n)$ is a $b^{-1} h_1 \dots h_{l-1}$-barrier. This finishes our argument.
\end{proof}

\begin{example}
	\label{support = powers of generators}
	Consider a symmetric random walk on $F_m$ supported on $\{ a_i^k \}_{1 \le |k| \le n}$. Then the equations \eqref{cylinder sets, barriers} for the $a_i$-barrier $B = \{ a_i, \dots, a_i^n \}$ look as follows:
	\begin{equation}
		\nu(C(a_i^k)) = \sum_{j=1}^{k-1} F_\mu(e, a_i^j; B \setminus \{a_i^j\}) \nu(C(a_i^{-j+k})) + \sum_{j=k}^{n}  F_\mu(e, a_i^j; B \setminus \{a_i^j\}) (1-\nu(C(a_i^{-j+k-1}))), 
	\end{equation}
	but as we have $\nu(C(a_i^j)) = \nu(C(a_i^{-j}))$ due to invariance with respect to the automorphism $a_i \mapsto a_i^{-1}$, we get
	\begin{equation}
		\label{system of equations 1}
			\nu(C(a_i^k)) = \sum_{j=1}^{k-1} F_\mu(e, a_i^j; B \setminus \{a_i^j\}) \nu(C(a_i^{k-j})) + \sum_{j=k}^{n}  F_\mu(e, a_i^j; B \setminus \{a_i^j\}) (1 - \nu(C(a_i^{j-k+1}))). 
	\end{equation}
\end{example}
\begin{example}
	\label{supprt = gens + ab, b-1a-1}
	Consider an admissible symmetric random walk on $F_2$ which is supported on $\{a, a^{-1}, ab, b^{-1} a^{-1}\}$. Let us consider the following barriers:
	\begin{enumerate}
		\item The subset $\{a, ab\}$ is an $a$-barrier:
		\[
		\begin{gathered}
			\nu(C(a)) = F_\mu(e, a; \{ab\}) (1 - \nu(C(a^{-1}))) + F_\mu(e, ab; \{a\}) (1 - \nu(C(b^{-1}a^{-1})),
		\end{gathered}
		\]
		\item 
		The subset $\{ b^{-1} a^{-1} \}$ is a $b^{-1}$-barrier:
		\[
			\nu(C(b^{-1})) = F_\mu(e, b^{-1} a^{-1}) (1 - \nu(C(ab))), \quad \nu(C(b^{-1} a^{-1})) = F_\mu(e, b^{-1} a^{-1}) (1 - \nu(C(a)))
		\]
		\item 
		The subset $\{ ab \}$ is an $ab$-barrier:
		\[
			\nu(C(ab)) = F_\mu(e,ab) (1 - \nu(C(b^{-1})))		
		\]
		\item 
		The subset $\{ b \}$ is a $b$-barrier:
		\[
		\nu(C(b)) = F_\mu(e,b) (1 - \nu(C(b^{-1})))
		\]
		\item 
		The subset $\{a^{-1}\}$ is an $a^{-1}$-barrier:
		\[
		\nu(C(a^{-1})) = F_\mu(e,a^{-1}) (1 - \nu(C(a)))
		\]
	\end{enumerate}
\end{example}
\subsection{Strong barriers}
In this subsection we modify the barrier definition in a way that allows us to formulate starting point-independent versions of Lemma \ref{lemma1} and Proposition \ref{corollary1}.
\begin{definition}
	\label{strong barrier definition}
	Let $g \in F_m \setminus \{e\}$. A subset $B \subset F_m$ is a \textbf{strong} $g$-\textbf{barrier} if
	\begin{itemize}
		\item $B \subset C_{fin}(g)$, 
		\item $F_\mu(h', h; B) = 0$ for any $h \in C_{fin}(g)$, $h' \in F_m \setminus C_{fin}(g)$.
	\end{itemize}
\end{definition}

\textbf{Remark.} It is easy to see that the second condition implies that every strong barrier is a barrier, as $e \notin C_{fin}(g)$ for any non-identity $g$.

In order to formulate the next set of theorems, we need to establish some basic statements about shadows. Let us denote $|g| = d_w(e, g)$, where $d_w$ stands for the word distance on $F_m$.

\begin{lemma}
	\label{shadow1}
	Let $y \in C_{fin}(a_i^{\pm 1})$. Then $|xy| = |x| + |y|$ if and only if $x^{-1} \notin C_{fin}(a_i^{\pm 1})$.
\end{lemma}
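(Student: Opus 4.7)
The plan is to reduce the statement to a direct cancellation analysis in the free group $F_m$. First, I would fix the sign $\varepsilon \in \{\pm 1\}$ so that $y \in C_{fin}(a_i^\varepsilon)$; by the definition of the shadow, this simply means that the reduced form of $y$ begins with the letter $a_i^\varepsilon$, i.e.\ $y = a_i^\varepsilon y'$ with $|y| = 1 + |y'|$ and no cancellation between $a_i^\varepsilon$ and $y'$.

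Next, I would invoke the elementary fact that in a free group the identity $|xy| = |x| + |y|$ holds if and only if no cancellation occurs at the boundary of the reduced forms of $x$ and $y$, i.e.\ the last letter of $x$ is not the inverse of the first letter of $y$. Since the first letter of $y$ is $a_i^\varepsilon$, this is equivalent to the last letter of $x$ not being $a_i^{-\varepsilon}$. Note that if the last letter of $x$ happens to be $a_i^\varepsilon$ itself, no reduction takes place because free groups have no relations beyond inverse cancellation; so the forbidden letter is exactly $a_i^{-\varepsilon}$ and nothing more.

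Finally, I would translate the condition on the last letter of $x$ into a condition on the first letter of $x^{-1}$: since the first letter of $x^{-1}$ is the inverse of the last letter of $x$, the last letter of $x$ equals $a_i^{-\varepsilon}$ if and only if $x^{-1}$ begins with $a_i^\varepsilon$, i.e.\ $x^{-1} \in C_{fin}(a_i^\varepsilon)$. Negating both sides of this equivalence produces exactly the statement of the lemma.

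There is no substantive obstacle; the proof is an exercise in reduced-word bookkeeping. The only point worth a little care is the interpretation of the $\pm 1$ notation: the sign $\varepsilon$ appearing implicitly in $C_{fin}(a_i^{\pm 1})$ must be read consistently in the hypothesis and in the conclusion, with $\varepsilon$ fixed by the first letter of the reduced form of $y$.
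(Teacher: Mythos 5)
Your proof is correct and follows essentially the same route as the paper's: translate $|xy|=|x|+|y|$ into the no-cancellation condition on the last letter of $x$ versus the first letter $a_i^{\varepsilon}$ of $y$, and then recast the last letter of $x$ as the inverse of the first letter of $x^{-1}$. Your explicit remark that a terminal $a_i^{\varepsilon}$ in $x$ causes no reduction, and your insistence on reading the sign $\varepsilon$ consistently on both sides, are welcome clarifications of points the paper leaves implicit.
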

\begin{proof}
	By the definition of word distance, $|xy| = |x| + |y|$ if and only if the last digit of $x$ in its reduced representation does not equal to the first digit of $y$ in its reduced representation. However, the last digit of $x$ is the inverse of first digit of $x^{-1}$, and $w$ is the first digit of $x^{-1}$ if and only if $x^{-1} \in C_{fin}(w)$. 
\end{proof}

\begin{lemma}
	\label{shadow2}
	Consider $g, x \in F_m$ such that $x \notin C_{fin}(g)$, and $g^{-1} x \in C_{fin}(a_i^{\pm 1})$. Then $g^{-1} \in C_{fin}(a_i^{\pm 1})$.
\end{lemma}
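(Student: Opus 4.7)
The plan is to derive this as an immediate consequence of Lemma \ref{shadow1}, after the right substitution. Observe that the condition $g^{-1} \in C_{fin}(a_i^{\pm 1})$ depends only on the last letter of the reduced form of $g$, while $x \in C_{fin}(g)$ is the statement $|x| = |g| + |g^{-1}x|$. So the shape of the hypothesis and conclusion suggests applying Lemma \ref{shadow1} with the roles chosen so that the noncancellation identity on the left mirrors the shadow condition for $g$.

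Concretely, I would set $y := g^{-1}x$, which lies in $C_{fin}(a_i^{\pm 1})$ by hypothesis, and take the first argument of the lemma to be $g$ itself. Lemma \ref{shadow1} then gives
\[
|g \cdot g^{-1}x| = |g| + |g^{-1}x| \iff g^{-1} \notin C_{fin}(a_i^{\pm 1}).
\]
Since $g \cdot g^{-1}x = x$ as group elements (and hence in reduced length), the left-hand condition is exactly $|x| = |g| + |g^{-1}x|$, which is the defining condition for $x \in C_{fin}(g)$. Taking the contrapositive of the resulting equivalence $x \in C_{fin}(g) \iff g^{-1} \notin C_{fin}(a_i^{\pm 1})$ yields the lemma.

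I expect no serious obstacle: the argument is essentially bookkeeping. The only mild subtlety worth double-checking is that the substitution is legal, i.e.\ that the hypothesis $y \in C_{fin}(a_i^{\pm 1})$ of Lemma \ref{shadow1} is satisfied by $y = g^{-1}x$, which is given, and that nothing in Lemma \ref{shadow1} requires $g \ne e$ (and if it did, the case $g = e$ is vacuous because then $C_{fin}(g) = F_m \ni x$). If for some reason one preferred a self-contained argument, one could instead write $g = g_1 \cdots g_n$ and $x = x_1 \cdots x_k$ in reduced form, let $j$ be the length of their longest common prefix, note that $x \notin C_{fin}(g)$ forces $j < n$, and observe that the reduced form of $g^{-1}x$ then begins with $g_n^{-1}$, so $g^{-1}x \in C_{fin}(a_i^{\pm 1})$ forces $g_n^{-1} = a_i^{\pm 1}$, which in turn makes $g^{-1}$ begin with $a_i^{\pm 1}$. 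Both routes are short; the first is preferable because it reuses Lemma \ref{shadow1} directly.
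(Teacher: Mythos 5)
Your proposal is correct and takes the same route as the paper: apply Lemma \ref{shadow1} with $y = g^{-1}x$ and $g$ in the first slot, so $|x| = |g| + |g^{-1}x|$ is equivalent to $g^{-1} \notin C_{fin}(a_i^{\pm 1})$, and then take the contrapositive. (Incidentally, the paper's proof has a minor typo, concluding $g \in C_{fin}(a_i^{\pm 1})$ rather than $g^{-1} \in C_{fin}(a_i^{\pm 1})$; your version states the conclusion correctly.)
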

\begin{proof}
	As $x \notin C_{fin}(g)$, we have $|x| < |g| + |g^{-1} x|$. We know that $g^{-1} x \in C_{fin}(a_i^{\pm 1})$, therefore, Lemma \ref{shadow1} implies that $g \in C_{fin}(a_i^{\pm 1})$.
\end{proof}
\begin{lemma}
	\label{shadow3}
	Let $g \in F_m \setminus \{ e \}$, and consider $x \notin C_{fin}(g)$. Then $x^{-1} C_{fin}(g) = C_{fin}(x^{-1}g)$.
\end{lemma}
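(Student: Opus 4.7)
The approach is to unwind both sides to a statement about reduced words in $F_m$. The key observation is that $x\notin C_{fin}(g)$ is equivalent to saying that $g$, written as a reduced word $g_1\dots g_n$, is not a prefix of the reduced word of $x$. Writing $x=x_1\dots x_m$ and letting $k<n$ be the length of the longest common prefix of $x$ and $g$, a short case analysis (depending on whether $k<m$, in which case $x_{k+1}\neq g_{k+1}$ are not inverses, or $k=m$, in which case $x$ is a strict prefix of $g$) shows that $x^{-1}g$ reduces to
\[
x_m^{-1}\dots x_{k+1}^{-1}g_{k+1}\dots g_n,
\]
whose last letter is $g_n$, the last letter of $g$.

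Given this, I would prove the two inclusions directly. For $x^{-1}C_{fin}(g)\subseteq C_{fin}(x^{-1}g)$, any $h\in C_{fin}(g)$ has the form $h=gz$ with $z$ reduced and not beginning with $g_n^{-1}$; then $x^{-1}h=(x^{-1}g)z$, and this concatenation is again reduced since the last letter $g_n$ of the reduced $x^{-1}g$ does not cancel with the first letter of $z$, so $|x^{-1}h|=|x^{-1}g|+|z|$ and $x^{-1}h\in C_{fin}(x^{-1}g)$. The reverse inclusion is symmetric: any $y\in C_{fin}(x^{-1}g)$ is of the form $(x^{-1}g)z$ with $z$ reduced and not starting with $g_n^{-1}$, so $xy=gz$ with no further cancellation, giving $xy\in C_{fin}(g)$ and $y\in x^{-1}C_{fin}(g)$.

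A more geometric alternative is available: viewing $F_m$ as a regular tree, the hypothesis $x\notin C_{fin}(g)$ says precisely that $g$ does not lie on the geodesic $[e,x]$, so the median of $\{e,x,g\}$ lies strictly between $e$ and $g$; then for any $h\in C_{fin}(g)$ one checks using the tree structure that $g$ lies on the geodesic $[x,h]$, yielding $|x^{-1}h|=|x^{-1}g|+|g^{-1}h|$, which is exactly the $C_{fin}(x^{-1}g)$-condition. I do not anticipate a serious obstacle; the only delicate point is verifying that no unexpected cancellation occurs when juxtaposing $x^{-1}g$ with $z$, and this is precisely what the computation of the reduced form of $x^{-1}g$ in the first step is designed to secure.
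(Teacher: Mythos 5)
Your proof is correct, and it rests on the same core fact as the paper's proof: when $x\notin C_{fin}(g)$, the reduced form of $x^{-1}g$ still ends in $g_n$, the last letter of $g$, so that juxtaposing $x^{-1}g$ with any reduced $z$ not beginning with $g_n^{-1}$ produces no cancellation. The difference is how that fact is obtained. The paper derives it indirectly from its Lemmas~\ref{shadow1} and~\ref{shadow2}, which characterize last letters via membership in shadows $C_{fin}(a_i^{\pm1})$; this keeps the proof modular within the paper's shadow machinery. You instead compute the reduced form of $x^{-1}g$ explicitly in terms of the length $k<n$ of the longest common prefix of $x$ and $g$, which is more elementary and self-contained, and incidentally yields the length formula $|x^{-1}g|=|x|+|g|-2k$ for free. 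After the key fact is established, the two-inclusion structure in your write-up matches the paper's. Your sketched tree-theoretic alternative via the median of $\{e,x,g\}$ is also sound and conceptually the cleanest: $x\notin C_{fin}(g)$ means $g\notin[e,x]$, so the median lies strictly inside $[e,g]$, and one then checks that $g\in[x,h]$ for every $h\in C_{fin}(g)$, giving $d(x,h)=d(x,g)+d(g,h)$ and hence both inclusions at once; the only extra work is justifying that $g$ lies on $[x,h]$, which the combinatorial argument sidesteps.
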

\begin{proof}
	We will proceed by showing that $C_{fin}(g) \subseteq x C_{fin}(x^{-1}g)$ and $x C_{fin}(x^{-1}g) \subseteq C_{fin}(g)$.
	\begin{itemize}
		\item Suppose that $g' \in C_{fin}(g)$. Then, by definition, $g' = g w$, and $|g'| = |g| + |w|$. We would like to prove that $|x^{-1} g'| = |x^{-1} g| + |w|$. This is equivalent to $|(g')^{-1} x| = |w^{-1}| + |g^{-1} x|$. If $g^{-1}x \in C_{fin}(a_i^{\pm 1})$, then Lemma \ref{shadow2} implies $g^{-1} \in C_{fin}(a_i^{\pm 1})$. Suppose that $w \in C_{fin}(a_i^{\pm 1})$. Then we can apply Lemma \ref{shadow1} to $|gw| = |g| + |w|$, and we get $g^{-1} \notin C_{fin}(a_i^{\pm 1})$, which leads to a contradiction. Therefore, $w \notin C_{fin}(a_i^{\pm 1})$, and $|(g')^{-1} x| = |w^{-1}| + |g^{-1} x|$ due to Lemma \ref{shadow1}.
		\item Now suppose that $g' \in C_{fin}(x^{-1} g)$. We want to prove that $x g' \in C_{fin}(g)$. By definition, $|g'| = |x^{-1} g| + |g^{-1} x g'| = |(g')^{-1} x^{-1} g| + |g^{-1} x|$. If $g^{-1} x \in C_{fin}(a_i^{\pm 1})$, Lemma \ref{shadow2} implies that $g^{-1} \in C_{fin}(a_i^{\pm 1})$, and Lemma \ref{shadow1} implies that $(g')^{-1} x^{-1} g \notin C_{fin}(a_i^{\pm 1})$, and the same Lemma implies that $|(g')^{-1} x^{-1} g| + |g^{-1}| = |(g')^{-1} x^{-1}|$, which is equivalent to $x g' \in C_{fin}(g)$ by definition.
	\end{itemize}
\end{proof}

We can use Lemma \ref{lemma1} to get the following statement:
\begin{lemma}
	\label{lemma2}
	If $B$ is a strong $g$-barrier, then for every $h \in C_{fin}(g)$, $x \in F_m \setminus C_{fin}(g)$ we have
	\begin{equation}
		F_\mu(x, h) = \sum_{b \in B} F_\mu(x, b; B \setminus \{b\}) F_\mu(b, h).
	\end{equation}
\end{lemma}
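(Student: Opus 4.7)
The plan is to deduce Lemma \ref{lemma2} directly from Lemma \ref{lemma1} by exploiting the left translation-invariance of the random walk on $F_m$: for any $u, v \in F_m$ and any $S \subset F_m$ with $v \notin S$, one has
\[
F_\mu(u, v; S) = F_\mu(e, u^{-1}v; u^{-1}S),
\]
because the increments of the walk are i.i.d. Applied with $u = x$, this converts the desired identity into a statement about the walk started at $e$ with target $x^{-1}h$ and candidate barrier $x^{-1}B$. The key step is therefore to check that the translated set $x^{-1}B$ is a (standard) $(x^{-1}g)$-barrier in the sense of Definition \ref{barrier definition}, and then invoke Lemma \ref{lemma1}.

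To verify this, I would first use Lemma \ref{shadow3}, which gives $x^{-1}C_{fin}(g) = C_{fin}(x^{-1}g)$ thanks to the hypothesis $x \notin C_{fin}(g)$. In particular $x^{-1}B \subset C_{fin}(x^{-1}g)$, and any $\tilde h \in C_{fin}(x^{-1}g)$ can be written as $\tilde h = x^{-1}h$ with $h \in C_{fin}(g)$. The translation identity above then gives
\[
F_\mu(e, \tilde h; x^{-1}B) = F_\mu(x, h; B),
\]
and the right-hand side vanishes by the second clause of Definition \ref{strong barrier definition}, applied with $h' = x \in F_m \setminus C_{fin}(g)$ and $h \in C_{fin}(g)$. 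Hence $x^{-1}B$ is a $(x^{-1}g)$-barrier in the sense of Definition \ref{barrier definition}.

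Now I would apply Lemma \ref{lemma1} to the element $x^{-1}g$, the barrier $x^{-1}B$, and the target $x^{-1}h \in C_{fin}(x^{-1}g)$. After reindexing the resulting sum by $b' = x^{-1}b$ and noting that $x^{-1}B \setminus \{x^{-1}b\} = x^{-1}(B \setminus \{b\})$, this reads
\[
F_\mu(e, x^{-1}h) = \sum_{b \in B} F_\mu(e, x^{-1}b; x^{-1}(B \setminus \{b\}))\, F_\mu(x^{-1}b, x^{-1}h).
\]
Translating each factor back by $x$ with one more application of $F_\mu(u, v; S) = F_\mu(e, u^{-1}v; u^{-1}S)$ yields exactly the claimed identity. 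There is no real obstacle in this argument: all of the content sits in Lemma \ref{shadow3} and the second clause of Definition \ref{strong barrier definition}, and everything else is bookkeeping about how left translation interacts with shadows and with restricted first-passage functions.
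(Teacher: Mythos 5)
Your proposal is correct and follows essentially the same route as the paper: translate by $x^{-1}$, use Lemma \ref{shadow3} together with the second clause of Definition \ref{strong barrier definition} to check that $x^{-1}B$ is an $(x^{-1}g)$-barrier, and then apply Lemma \ref{lemma1}. The only difference is that you spell out the translation-invariance bookkeeping a bit more explicitly than the paper does.
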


\begin{proof}
	Let us prove that $x^{-1} B$ is a $x^{-1} g$-barrier. For every element $y \in C_{fin}(x^{-1} g)$ we have
	\[
	F_\mu(e, y; x^{-1} B) = F_\mu(x, xy; B) = 0,
	\]
	as $xy \in C_{fin}(g)$ due to Lemma \ref{shadow3}. Therefore,
	\[
	\sum_{b \in B} F_\mu(x, b; B \setminus \{b\}) F_\mu(b, h) = \sum_{b \in B} F_\mu(e, x^{-1} b; x^{-1} B \setminus \{x^{-1 }b\}) F_\mu(x^{-1} b, x^{-1} h) = F_\mu(e, x^{-1} g) = F_\mu(x, g),
	\]
	due to Lemma \ref{lemma1}.
\end{proof}

In a similar way we can generalize Proposition \ref{corollary1} as well:

\begin{proposition}
	\label{corollary2}
	Let $B$ be a strong $g$-barrier. Then for every $h \in C_{fin}(g)$, $x \in F_m \setminus C_{fin}(g)$, where $h = h_1 \dots h_l$ is the reduced representation, we have
	\begin{equation}
		\begin{aligned}
			\nu(C(x^{-1} h)) &= \sum_{b \in B \cap C_{fin}(h)} F_\mu(x, b; B \setminus \{b\}) (1 - \nu(C( b^{-1} h_1 \dots h_{l-1}))) + \\ & + \sum_{b \in B \setminus C_{fin}(h)} F_\mu(x, b; B \setminus \{b\}) \nu(C( b^{-1} h)).
		\end{aligned}
	\end{equation}
\end{proposition}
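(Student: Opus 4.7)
The plan is to mirror the argument used for Proposition \ref{corollary1}, except that the walk starts at $x$ rather than at $e$, and the resulting hitting probabilities are reinterpreted as cylinder measures by means of Lemma \ref{shadow3}.

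First I would note that $h \in C_{fin}(g)$ gives $C_{fin}(h) \subseteq C_{fin}(g)$, and that Lemma \ref{shadow3}, applied at the level of boundary rays in the obvious way, identifies $\nu(C(x^{-1}h))$ with the probability that the walk started from $x$ hits the boundary inside $C(h)$.

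Next, exactly as in the proof of Proposition \ref{corollary1}, I would introduce
\[
S_{ent} = C_{fin}(h) \cap B(h, n), \qquad S_{exit} = (F_m \setminus C_{fin}(h)) \cap B(h, n),
\]
so that $S_{ent}$ is an $h$-barrier and
\[
\nu(C(x^{-1}h)) = \sum_{k \ge 0} F_\mu(\underbrace{x \rightarrow S_{ent} \rightarrow S_{exit} \rightarrow \dots \rightarrow S_{ent} \nrightarrow S_{exit}}_{k \text{ returns}}).
\]
The key move is to apply Lemma \ref{lemma2} to the very first excursion: since $S_{ent} \subseteq C_{fin}(g)$, $B$ is a strong $g$-barrier, and $x \notin C_{fin}(g)$, every trajectory from $x$ that reaches $S_{ent}$ must pass through $B$ first. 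This factors each summand as $\sum_{b \in B} F_\mu(x, b; B \setminus \{b\}) \, F_\mu(b \rightarrow \dots)$. I would then split $B = (B \cap C_{fin}(h)) \sqcup (B \setminus C_{fin}(h))$: for $b$ in the first piece the walk is already inside $C_{fin}(h)$, so the next target is $S_{exit}$, whereas for $b$ in the second piece it is $S_{ent}$.

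Summing over $k$ and translating each inner probability by $b^{-1}$ then converts them into $1 - \nu(C(b^{-1} h_1 \dots h_{l-1}))$ and $\nu(C(b^{-1}h))$, respectively. The main bookkeeping obstacle is checking that the translates $b^{-1} S_{ent}$ and $b^{-1} S_{exit}$ really are the correct barriers for $b^{-1}h$ and $b^{-1} h_1 \dots h_{l-1}$ after the shift; this requires a careful reapplication of Lemma \ref{shadow3} in each case, using either $b \notin C_{fin}(h)$ or $b \notin C_{fin}(h_1 \dots h_{l-1})$ as appropriate. Once these shadow identities are in place, the two pieces align term-by-term with the right-hand side of the claimed identity, concluding exactly as in Proposition \ref{corollary1}.
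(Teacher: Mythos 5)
Your proposal is correct, and the combinatorial skeleton---entry/exit sets $S_{ent}, S_{exit}$, factoring the first hit through $B$, splitting $B$ into $B \cap C_{fin}(h)$ and $B \setminus C_{fin}(h)$, then translating by $b^{-1}$---is the same engine that drives the paper's Proposition~\ref{corollary1}. The route you take, however, is structurally different from the paper's proof of Proposition~\ref{corollary2}: the paper does not replay the entry/exit argument at all. Instead it conjugates the entire picture by $x^{-1}$: it observes (as already established in the proof of Lemma~\ref{lemma2}) that $x^{-1}B$ is a genuine $x^{-1}g$-barrier, uses Lemma~\ref{shadow3} to identify $C_{fin}(x^{-1}g) = x^{-1}C_{fin}(g)$, and then simply invokes Proposition~\ref{corollary1} for the barrier $x^{-1}B$ and the element $x^{-1}h$; the displayed chain of equalities is just a change of summation index $b \leftrightarrow x^{-1}b$. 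That reduction buys brevity and reuses the already-proved statement as a black box, whereas your direct rederivation makes the probabilistic content more transparent but repeats work. Your version also carries one soft spot the reduction avoids: you assert
\[
\nu(C(x^{-1}h)) = \sum_{k \ge 0} F_\mu\bigl(x \rightarrow S_{ent} \rightarrow S_{exit} \rightarrow \dots \rightarrow S_{ent} \nrightarrow S_{exit}\bigr),
\]
which requires that $S_{ent}$ acts as a barrier from the new starting point $x$, not just from $e$; this is true (any step of length $\le n$ crossing from outside $C_{fin}(h)$ to inside must land in $B(h,n) \cap C_{fin}(h) = S_{ent}$), but it deserves a sentence of justification, as does the boundary analogue of Lemma~\ref{shadow3} giving $x^{-1}C(h) = C(x^{-1}h)$ when $x \notin C_{fin}(h)$. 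Once those two points are pinned down, your argument closes correctly.
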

\begin{proof}
	We apply the same idea used in the proof of Lemma \ref{lemma2}. As $x^{-1} B$ is a $x^{-1} g$-barrier, for every $x^{-1} h \in C_{fin}(x^{-1} g) = x^{-1} C_{fin}(g)$ we have
	\[
	\begin{aligned}
		&\sum_{b \in B \cap C_{fin}(h)} F_\mu(x, b; B \setminus \{b\}) (1 - \nu(C(b^{-1} h_1 \dots h_{l-1}))) + \\ & + \sum_{b \in B \setminus C_{fin}(h)} F_\mu(x, b; B \setminus \{b\}) \nu(C( b^{-1} h)) = \\ &= \sum_{x^{-1} b \in x^{-1}B \cap C_{fin}(x^{-1} h)} F_\mu(e, x^{-1} b; x^{-1} B \setminus \{x^{-1} b\}) (1 - \nu(C((x^{-1} b)^{-1} x^{-1} h_1 \dots h_{l-1}))) + \\ & + \sum_{x^{-1} b \in x^{-1} B \setminus C_{fin}(x^{-1} h)} F_\mu(e, x^{-1} b; x^{-1} B \setminus \{x^{-1} b\}) \nu(C((x^{-1} b)^{-1} x^{-1} h)) = \nu(C(x^{-1} h)).
	\end{aligned}
	\]
\end{proof}

Consider an anti-symmetric random walk on $F_m$ generated by a probability measure $\mu$. For every $1 \le i \le m$ fix a strong $a_i$-barrier $B = (b_j)_{1 \le j \le |B|}$. Then we can define the following matrix:

\[
(P_B)_{j_1, j_2} = F_\mu(a_i \widehat{b_{j_1}}, b_{j_2}; B \setminus b_{j_2}).
\]
Then, as a corollary from Proposition \ref{corollary2}, we get the following theorem:
\begin{theorem}
	\label{matrix identity}
	\[
	(\text{Id} + P_B)^{-1} P_B \begin{pmatrix} 1 \\ \vdots \\ 1 \end{pmatrix} = \begin{pmatrix}
		\nu(C(b^{-1}_1)) \\ \vdots \\ \nu(C(b^{-1}_{|B|}))
	\end{pmatrix}
	\]
\end{theorem}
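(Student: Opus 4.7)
The claim is equivalent to the linear system $(\mathrm{Id}+P_B)\vec v = P_B \vec 1$, where $\vec v=(\nu(C(b_j^{-1})))_j$ and $\vec 1$ is the all-ones vector. My plan is to produce this system row by row by applying Proposition \ref{corollary2} with $g=h=a_i$ and the carefully chosen starting point $x_{j_1} := a_i\,\widehat{b_{j_1}}$, then collect the resulting scalar identities.

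The first task is to check that $x_{j_1}\notin C_{fin}(a_i)$, which is required to invoke Proposition \ref{corollary2}. Since $b_{j_1}\in B\subset C_{fin}(a_i)$, I would write $b_{j_1}=a_i v$ in reduced form; reducedness forces the first letter of $v$ to differ from $a_i^{-1}$. Then $\widehat{b_{j_1}}=a_i^{-1}\hat v$, and the cancellation gives $x_{j_1}=a_i\widehat{b_{j_1}}=\hat v$, whose first letter is not $a_i$. So $x_{j_1}\notin C_{fin}(a_i)$.

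With $h=a_i$ of length $l=1$ the suffix $h_1\dots h_{l-1}$ is empty, and $B\setminus C_{fin}(a_i)=\emptyset$ because $B$ is a strong barrier, so Proposition \ref{corollary2} collapses to
\[
\nu(C(x_{j_1}^{-1}a_i)) = \sum_{j_2} F_\mu(x_{j_1}, b_{j_2}; B\setminus\{b_{j_2}\})\bigl(1-\nu(C(b_{j_2}^{-1}))\bigr) = \sum_{j_2}(P_B)_{j_1,j_2}\bigl(1-\nu(C(b_{j_2}^{-1}))\bigr).
\]
The left-hand side simplifies to $\widehat{b_{j_1}}^{-1}a_i^{-1}a_i=\widehat{b_{j_1}^{-1}}$. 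Here antisymmetry enters decisively: because $\mu$ is invariant under the automorphism $\varphi:g\mapsto\hat g$ and the hitting measure is the unique $\mu$-stationary measure on $\partial F_m$, the boundary extension $\bar\varphi_*\nu=\nu$, so $\nu(C(\widehat{b_{j_1}^{-1}}))=\nu(C(b_{j_1}^{-1}))$. This is exactly the row of the claimed identity indexed by $j_1$.

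Assembling these $|B|$ equations yields $\vec v = P_B\vec 1 - P_B\vec v$, that is $(\mathrm{Id}+P_B)\vec v = P_B\vec 1$. Invertibility of $\mathrm{Id}+P_B$ follows by noting that $\sum_{j_2}(P_B)_{j_1,j_2}$ is the probability that the walk started at $x_{j_1}$ ever enters $B$, which is strictly less than $1$ by transience; hence every Gershgorin disc of $P_B$ lies in $\{|z|<1\}$, so its spectral radius is less than $1$. The only real subtlety is the algebraic identification of $x_{j_1}^{-1}a_i$ with $\widehat{b_{j_1}^{-1}}$ together with the verification that $x_{j_1}\notin C_{fin}(a_i)$ — this is exactly where the ``twist'' $\widehat{b_{j_1}}$ in the definition of $P_B$ does its work, and without antisymmetry of $\mu$ one could not convert $\nu(C(\widehat{b_{j_1}^{-1}}))$ into the desired $\nu(C(b_{j_1}^{-1}))$.
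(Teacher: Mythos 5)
Your proposal is correct and follows the paper's own argument essentially verbatim: apply Proposition \ref{corollary2} with $h=a_i$ and starting point $a_i\widehat{b_{j_1}}\notin C_{fin}(a_i)$, use the $\varphi$-invariance of $\nu$ coming from antisymmetry to identify $\nu(C(\widehat{b_{j_1}^{-1}}))$ with $\nu(C(b_{j_1}^{-1}))$, and rearrange the resulting linear system. You additionally verify the membership $a_i\widehat{b_{j_1}}\notin C_{fin}(a_i)$ and the invertibility of $\mathrm{Id}+P_B$, two details the paper leaves implicit.
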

\begin{proof}
	Applying Lemma \ref{lemma2} and exploiting the antysymmetry, we get
	\[
	\nu(C(b_{j_1}^{-1})) = \nu(C(\widehat{b_{j_1}^{-1}})) = \nu(C((a_i \widehat{b_{j_1}})^{-1} a_i)) = \sum_{j_2} F_{\mu}(a_i \widehat{b_{j_1}}, b_{j_2}; B \setminus \{b_{j_2}\}) (1 - \nu(C(b_{j_2}^{-1})))
	\]
	for every $1 \le j_1, j_2 \le |B|$. This can be rewritten as
	\[
	P_B \begin{pmatrix}
		1 - \nu(C(b^{-1}_1)) \\ \vdots \\ 1 - \nu(C(b^{-1}_{|B|}))
	\end{pmatrix} = \begin{pmatrix}
		\nu(C(b^{-1}_1)) \\ \vdots \\ \nu(C(b^{-1}_{|B|}))
	\end{pmatrix}. 
	\]
	This is equivalent to
	\[
	P_B \begin{pmatrix} 1 \\ \vdots \\ 1 \end{pmatrix} = (\text{Id} + P_B)\begin{pmatrix}
		\nu(C(b^{-1}_1)) \\ \vdots \\ \nu(C(b^{-1}_{|B|}))
	\end{pmatrix} \Leftrightarrow (\text{Id} + P_B)^{-1} P_B \begin{pmatrix} 1 \\ \vdots \\ 1 \end{pmatrix} = \begin{pmatrix}
		\nu(C(b^{-1}_1)) \\ \vdots \\ \nu(C(b^{-1}_{|B|})) \end{pmatrix}.
	\]
\end{proof}

We will need the following lemma later.
\begin{lemma}
	\label{lemma6}
	Consider $g \in F_m \setminus \{e\}$, a strong $g$-barrier $B$, and an element $x \notin C(g)$ together with a set $B'$ such that for every $h \in C_{fin}(g)$ we have $F_\mu(x, h; B') = 0$ (this is equivalent to $x^{-1} B'$ being a $x^{-1} h$-barrier).
	
	Then for every $b \in B$ we have
	\[
	F_\mu(x, b; B \setminus \{b\}) = \sum\limits_{b' \in B' \setminus (B \setminus \{b\})} F_\mu(x, b'; B' \setminus \{b'\}) F_\mu(b', b; B \setminus \{b\}).
	\]
\end{lemma}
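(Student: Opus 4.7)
The plan is to decompose every positive-probability trajectory contributing to $F_\mu(x, b; B \setminus \{b\})$ according to the first time it enters $B'$. Since $b \in B \subset C_{fin}(g)$ by the definition of a strong $g$-barrier, and since the hypothesis on $B'$ states that $F_\mu(x, h; B') = 0$ for every $h \in C_{fin}(g)$, every such trajectory must hit $B'$ before (or upon) arriving at $b$. Letting $b' \in B'$ denote the first entry point, the strong Markov property (applied exactly as in the proof of Lemma \ref{lemma1}) gives
\[
F_\mu(x, b; B \setminus \{b\}) = \sum_{b' \in B'} F_\mu(x, b'; (B' \setminus \{b'\}) \cup (B \setminus \{b\}))\, F_\mu(b', b; B \setminus \{b\}).
\]

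Next, the summation range shrinks naturally to $b' \in B' \setminus (B \setminus \{b\})$: any contributing first-passage trajectory of $F_\mu(x, b; B \setminus \{b\})$ cannot enter $B'$ at a point of $B \setminus \{b\}$, since that would violate the $B \setminus \{b\}$-avoidance condition before reaching $b$. Equivalently, for such $b'$ one readily checks $F_\mu(b', b; B \setminus \{b\}) = 0$, so these indices contribute nothing to the sum.

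The key and most delicate step is to identify the first factor with $F_\mu(x, b'; B' \setminus \{b'\})$, that is, to show that imposing the extra avoidance of $B \setminus \{b\}$ is cost-free. Suppose toward contradiction that some positive-probability trajectory from $x$ reaches $b'$ while avoiding $B' \setminus \{b'\}$ but passing through some intermediate point $c \in B \setminus \{b\}$. Since $b' \notin B \setminus \{b\}$ we have $c \neq b'$, and since the full trajectory avoids $B' \setminus \{b'\}$ and ends at $b'$, the initial segment from $x$ to $c$ avoids $B'$ entirely. However $c \in B \subset C_{fin}(g)$, contradicting the hypothesis $F_\mu(x, c; B') = 0$.

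The main obstacle lies precisely in this final reconciliation: the two forbidden sets $B' \setminus \{b'\}$ and $B \setminus \{b\}$ interact in a subtle way that must be resolved through the barrier property of $B'$, using that $B \setminus \{b\} \subset C_{fin}(g)$ so no path from $x$ can enter $B \setminus \{b\}$ without first crossing $B'$. Once this identification is in hand, substitution into the decomposition above yields the stated identity.
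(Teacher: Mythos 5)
Your proof is correct and follows essentially the same route as the paper's: decompose at the first entry into $B'$ (forced because $b \in B \subset C_{fin}(g)$ and $F_\mu(x,b;B')=0$), discard the terms with $b' \in B\setminus\{b\}$ since $F_\mu(b',b;B\setminus\{b\})=0$ there, and observe that the extra avoidance of $B\setminus\{b\}$ in the first factor is automatic. Your final paragraph merely spells out a step the paper leaves implicit, and your justification of it (any premature visit to $c \in B\setminus\{b\} \subset C_{fin}(g)$ would produce a path from $x$ to $c$ avoiding $B'$, contradicting the hypothesis) is valid.
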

\begin{proof}
	This is another consequence of the full-probability formula, as every path from $x$ to $b$ which avoids $B \setminus \{b\}$ has to hit the barrier $B'$, as $b \in C_{fin}(g)$ due to the definition of a strong barrier. Moreover, if $b'$ lies in the intersection of $B'$ and $B \setminus \{b\}$, then the term $F_\mu(b', b; B \setminus \{b\})$ vanishes.
\end{proof}

\section{Proof of Proposition \ref{L:reduction to free groups} and Theorem \ref{one geodesic}}
\label{section3}
In this subsection we will be working in the setting of Example \ref{support = powers of generators}. We will need some algebraic lemmas in order to establish \eqref{E:lower bounds for fp function}.
\subsection{Technical linear-algebraic lemmas}
This lemma follows immediately from the geometric definition of positive spans.
\begin{lemma}
	\label{lemma about positive spans}
	Let $v_1, \dots v_n \in \mathbb{R}^2$ be positive vectors (both coordinates are positive). Then the positive linear span of $v_i$'s is the smallest cone which contains all $v_i$'s. In other words, there exist $1 \le i, j \le n$ such that for all $\begin{pmatrix}
		x \\ y
	\end{pmatrix} \in \text{pspan}\{v_i\}$ we have 
	\[
	\frac{(v_i)_2}{(v_i)_1} \le \frac{y}{x} \le \frac{(v_j)_2}{(v_j)_1}.
	\]
\end{lemma}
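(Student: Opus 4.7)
The plan is to parametrize each vector by its slope and reduce the claim to a statement about convex combinations. Set $s_k := (v_k)_2 / (v_k)_1$, which is a well-defined positive real since both coordinates of each $v_k$ are positive. Let $i$ and $j$ be indices achieving $\min_k s_k$ and $\max_k s_k$, respectively. The assertion of the lemma is then equivalent to the identity
\[
\operatorname{pspan}\{v_1, \dots, v_n\} \;=\; \{(x,y) : x > 0,\; s_i \le y/x \le s_j\} \cup \{0\},
\]
and I would prove the two inclusions separately.

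For the forward inclusion (positive span sits inside the cone between the extremal slopes), take any $v = \sum_k \lambda_k v_k$ with $\lambda_k \ge 0$ not all zero, and compute
\[
\frac{v_2}{v_1} \;=\; \frac{\sum_k \lambda_k (v_k)_2}{\sum_k \lambda_k (v_k)_1} \;=\; \frac{\sum_k \lambda_k (v_k)_1 s_k}{\sum_k \lambda_k (v_k)_1} \;=\; \sum_k w_k s_k,
\]
where $w_k := \lambda_k (v_k)_1 / \sum_l \lambda_l (v_l)_1$ are nonnegative weights summing to $1$. Thus the slope of $v$ is a convex combination of the $s_k$'s, and therefore lies in $[s_i, s_j]$. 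This is the essential content: positivity of the $(v_k)_1$ turns the slope of a positive combination into a genuine convex combination of the individual slopes.

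For the reverse inclusion, I would show that any $(x, y)$ with $x > 0$ and $s_i \le y/x \le s_j$ lies in the positive span of the single pair $\{v_i, v_j\}$, which is obviously contained in the full positive span. Solving the linear system $\alpha (v_i)_1 + \beta (v_j)_1 = x$, $\alpha (v_i)_2 + \beta (v_j)_2 = y$ gives a unique solution whose determinant $(v_i)_1 (v_j)_1 (s_j - s_i)$ is nonnegative; the slope constraint $s_i \le y/x \le s_j$ then forces $\alpha, \beta \ge 0$ by a direct check using Cramer's rule. The degenerate case $s_i = s_j$ collapses the cone to a single ray, which is handled by the same computation with $v_i$ and $v_j$ collinear.

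I do not anticipate any real obstacle: the entire argument is a two-line observation that slope is a convex combination under positive combinations, together with elementary planar linear algebra to invert the $2 \times 2$ system. The positivity hypothesis on the coordinates is used crucially in two places — to ensure the slopes $s_k$ are finite and positive, and to guarantee that the denominator $\sum_k \lambda_k (v_k)_1$ is strictly positive whenever the $\lambda_k$ are not all zero.
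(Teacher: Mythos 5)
Your proof is correct. The paper offers no argument here at all --- it simply declares the lemma to ``follow immediately from the geometric definition of positive spans'' --- and your observation that positivity of the first coordinates turns the slope of a positive combination into a convex combination of the individual slopes, together with the Cramer's-rule check for the reverse inclusion, is exactly the right way to make that assertion rigorous.
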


\begin{lemma}
	\label{perron-frob n-dim}
	Let $(p_i)_{1 \le i \le n}$ be \textbf{non-negative} numbers, and suppose that $0 < \nu_n < \dots < \nu_2 < \nu_1< \frac{1}{2}$ satisfy
	\begin{equation}
		\label{systemofequations}
		\begin{cases}
			p_1 (1 - \nu_1) + p_2 (1 - \nu_2) + \dots + p_n (1- \nu_n) = \nu_1 \\
			p_1 \nu_1+ p_2 (1- \nu_1) + \dots + p_n (1 - \nu_{n-1}) = \nu_2 \\
			\vdots \\
			p_1 \nu_{n-1} + p_2 \nu_{n-2} + \dots + p_{n-1} \nu_1+ p_n (1- \nu_1) = \nu_n.
		\end{cases}
	\end{equation}
	Then $\dfrac{\nu_1}{1 - \nu_1} \le \dfrac{1 - \nu_i}{1 - \nu_{i+1}}$ for all $1 \le i \le n-1$.
\end{lemma}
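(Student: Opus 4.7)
The plan is to rewrite the conclusion as an extremal principle. Set $\beta_l := 1 - \nu_l$, $\Delta_k := \nu_k - \nu_{k+1}$ for $k \ge 1$, $\Delta_0 := 1 - 2\nu_1$, and $\lambda_k := \Delta_k / \beta_{k+1}$ for $k = 0, 1, \ldots, n-1$. A direct calculation gives $\lambda_0 = 1 - \nu_1/(1-\nu_1)$ and $\lambda_k = 1 - \beta_k/\beta_{k+1}$ for $k \ge 1$, so the conclusion of the lemma is equivalent to the inequalities $\lambda_k \le \lambda_0$ for every $k = 1, \ldots, n-1$.

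First I would subtract the $(k+1)$-th equation from the $k$-th equation in \eqref{systemofequations}; after regrouping, the coefficient differences collapse to the clean recurrence
\[
\Delta_k \;=\; \sum_{j=1}^n p_j\, \Delta_{|k-j|}, \qquad k = 1, \ldots, n-1,
\]
where the $j = k$ term contributes $p_k \Delta_0$ because $(1-\nu_1) - \nu_1 = 1 - 2\nu_1$. Now assume for contradiction that some $k^* \in \{1, \ldots, n-1\}$ maximizes $\lambda_k$; substituting $\Delta_{|k^*-j|} = \lambda_{|k^*-j|}\, \beta_{|k^*-j|+1}$ and using $\lambda_{|k^*-j|} \le \lambda_{k^*}$ in the recurrence immediately gives $\beta_{k^*+1} \le \sum_j p_j \beta_{|k^*-j|+1}$. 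So the whole problem reduces to establishing the strict inequality
\[
\beta_{k+1} \;>\; \sum_{j=1}^n p_j\, \beta_{|k-j|+1}, \qquad k = 1, \ldots, n-1.
\]

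For this last step, set $\sigma_m := 1 - \nu_m - \nu_{m+1}$. A careful bookkeeping on the $k$-th and $(k+1)$-th equations of \eqref{systemofequations} (expanding via $\beta_m + \beta_{m+1} = 1 + \sigma_m$) should yield the key identity
\[
\beta_{k+1} - \sum_{j=1}^n p_j\, \beta_{|k-j|+1} \;=\; \sigma_k - \sum_{j=1}^{k-1} p_j\, \sigma_{k-j}.
\]
The decisive observation is that $\sigma_m$ is strictly increasing in $m$, since $\sigma_{m+1} - \sigma_m = \nu_m - \nu_{m+2} > 0$ by the strict monotonicity of the $\nu_i$'s; moreover the first equation $\sum_j p_j \beta_j = \nu_1$ combined with $\beta_j > 1/2 > \nu_1$ forces $\sum_j p_j < 2\nu_1 < 1$. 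Therefore $\sum_{j=1}^{k-1} p_j\, \sigma_{k-j} \le \sigma_{k-1} \sum_j p_j < \sigma_{k-1} < \sigma_k$, which is exactly the required strict inequality. The main obstacle I expect is the bookkeeping leading to the $\sigma$-identity, where several terms involving $\beta$'s, $\nu$'s, and the $p_j$'s must be matched carefully; once that identity is in hand, the extremal principle together with the monotonicity of $\sigma_m$ closes the argument essentially for free.
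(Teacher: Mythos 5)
Your argument is correct, and it takes a genuinely different route from the paper's. Both proofs share the same extremal/contradiction skeleton — pick the worst index among the ratios $\lambda_k$ (equivalently, the paper's minimum of $\nu_1/(1-\nu_1), (1-\nu_1)/(1-\nu_2), \dots$) and rule out that it occurs at $k\ge 1$ — but the mechanism that delivers the contradiction is different. The paper reinterprets consecutive equations in \eqref{systemofequations} as a membership statement in a two-dimensional positive cone (via Lemma \ref{lemma about positive spans}) and then iterates the resulting index inequality down to $j=1$. You instead subtract consecutive equations to obtain the convolution recurrence $\Delta_k=\sum_j p_j\Delta_{|k-j|}$, feed the extremal index into it, and reduce everything to the single strict inequality $\beta_{k+1}>\sum_j p_j\beta_{|k-j|+1}$. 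Your $\sigma$-identity $\beta_{k+1}-\sum_j p_j\beta_{|k-j|+1}=\sigma_k-\sum_{j<k}p_j\sigma_{k-j}$ does check out (it is exactly the $k$-th equation of \eqref{systemofequations} in disguise), the strict monotonicity $\sigma_{m+1}-\sigma_m=\nu_m-\nu_{m+2}>0$ and positivity $\sigma_m=1-\nu_m-\nu_{m+1}>0$ are immediate from $\nu_n<\dots<\nu_1<\tfrac12$, and $\sum_j p_j<1$ follows cleanly from the first equation since $\nu_1=\sum_j p_j\beta_j\ge(1-\nu_1)\sum_j p_j$. What your route buys is self-containment and transparency: it avoids the auxiliary cone lemma and the somewhat delicate iteration down to $j=1$, replacing them with a one-line monotonicity estimate; it also produces the strict inequality $\lambda_k<\lambda_0$ as a bonus. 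What the paper's route buys is a more geometric picture (the $\nu$'s sit inside a nested family of cones) that motivated its discovery, though at the cost of tracking a chain of indices. Either argument is a complete proof of the lemma.
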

\begin{proof}
	Suppose that 
	\[
	\min \left\lbrace \frac{\nu_1}{1 - \nu_1}, \frac{1 - \nu_1}{1 - \nu_2}, \dots, \frac{1 - \nu_{n-1}}{1 - \nu_n} \right\rbrace = \frac{1 - \nu_j}{1 - \nu_{j+1}} < \frac{\nu_1}{1 - \nu_1}. 
	\]
	for some $1 \le j \le n-1$. Then $\dfrac{1 - \nu_j}{1 - \nu_{j+1}} > \dfrac{\nu_{j+1}}{\nu_j}$ due to $x \mapsto x(1-x)$ being monotone increasing on $[0, \frac{1}{2}]$. However, we can reinterpret the $j$-th and $(j+1)$-th equations in \eqref{systemofequations} as follows:
	\[
	\begin{pmatrix}
		\nu_j \\ \nu_{j+1} \end{pmatrix} \in \text{pspan}\left\lbrace \begin{pmatrix}
		\nu_{j-1} \\ \nu_{j-2}
	\end{pmatrix}, \begin{pmatrix}
		\nu_{j-2} \\ \nu_{j-3}
	\end{pmatrix}, \dots, \begin{pmatrix}
		1 - \nu_1 \\  \nu_1
	\end{pmatrix} ,\dots \begin{pmatrix}
		1 - \nu_{n-j+1} \\ 1 - \nu_{n-j}
	\end{pmatrix} \right\rbrace.
	\]
	Moreover, as we know that $\dfrac{1 - \nu_i}{1 - \nu_{i+1}} \ge \dfrac{1 - \nu_j}{1 - \nu_{j+1}} > \dfrac{\nu_{j+1}}{\nu_j}$ for all $1 \le i \le n-1$ and $\dfrac{\nu_1}{1 - \nu_1} > \dfrac{1 - \nu_j}{1 - \nu_{j+1}} > \dfrac{\nu_{j+1}}{\nu_j}$, due to the Lemma \ref{lemma about positive spans} applied to $\left\lbrace \begin{pmatrix}
		\nu_{j-1} \\ \nu_{j-2}
	\end{pmatrix}, \begin{pmatrix}
		\nu_{j-2} \\ \nu_{j-3}
	\end{pmatrix}, \dots, \begin{pmatrix}
		1 - \nu_1 \\  \nu_1
	\end{pmatrix} ,\dots \begin{pmatrix}
		1 - \nu_{n-j+1} \\ 1 - \nu_{n-j}
	\end{pmatrix} \right\rbrace$, there should be an index $j' < j$ such that
	\[
	\frac{\nu_{j'+1}}{\nu_{j'}} \le \frac{\nu_{j+1}}{\nu_j}.
	\]

	But then we can repeat the argument until we get $\dfrac{\nu_2}{\nu_1} \le \dfrac{\nu_{j+1}}{\nu_{j} }$. However, this will not work, as, on the one hand, the first and second equations in \eqref{systemofequations} give
	\[
	\begin{pmatrix}
		\nu_{1} \\ \nu_{2} \end{pmatrix} \in \text{pspan} \left\lbrace\begin{pmatrix}
		1 - \nu_1 \\ \nu_1
	\end{pmatrix} ,\dots \begin{pmatrix}
		1 - \nu_{n} \\ 1 - \nu_{n-1}
	\end{pmatrix} \right\rbrace,
	\]
	but on the other hand,
	\[
	\dfrac{\nu_2}{\nu_1} \le \dfrac{\nu_{j+1}}{\nu_j} < \frac{1 - \nu_j}{1 - \nu_{j+1}} = \min \left\lbrace \frac{\nu_1}{1 - \nu_1}, \frac{1 - \nu_1}{1 - \nu_2}, \dots, \frac{1 - \nu_{n-1}}{1 - \nu_n} \right\rbrace.
	\]
	Therefore, $\begin{pmatrix}
		\nu_1 \\ \nu_2
	\end{pmatrix}$ cannot belong to the cone generated by $\left\lbrace\begin{pmatrix}
		1 - \nu_1 \\ \nu_1
	\end{pmatrix} ,\dots \begin{pmatrix}
		1 - \nu_{n} \\ 1 - \nu_{n-1}
	\end{pmatrix} \right\rbrace$, which leads to a contradiction.
\end{proof}
\begin{corollary}
	\label{important corollary}
	In the setting of the Lemma \ref{perron-frob n-dim}, if $p_n > 0$, we always have $\dfrac{\nu_1}{1 - \nu_1} \le \lambda_1$, where $\lambda_1$ is the largest root of $x^n - p_1 x^{n-1} - \dots - p_n$.
\end{corollary}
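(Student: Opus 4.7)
The plan is to turn the first equation of \eqref{systemofequations} into a polynomial inequality for $r := \nu_1/(1-\nu_1)$ via the ratio bound supplied by Lemma \ref{perron-frob n-dim}, and then compare with the sign behavior of $P(x) = x^n - p_1 x^{n-1} - \dots - p_n$ at $\lambda_1$.

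First, I would rewrite the first equation in \eqref{systemofequations} by dividing both sides by $1 - \nu_1$:
\[
r \;=\; \frac{\nu_1}{1-\nu_1} \;=\; p_1 + p_2 \frac{1 - \nu_2}{1 - \nu_1} + \dots + p_n \frac{1 - \nu_n}{1 - \nu_1}.
\]
Next, I apply Lemma \ref{perron-frob n-dim}, which gives $\frac{1 - \nu_{i+1}}{1 - \nu_i} \le \frac{1}{r}$ for every $1 \le i \le n-1$. Telescoping yields
\[
\frac{1 - \nu_i}{1 - \nu_1} \;\le\; \frac{1}{r^{i-1}} \qquad (1 \le i \le n).
\]
Since every $p_i$ is non-negative, substituting these bounds into the displayed equation above and multiplying through by $r^{n-1}$ gives
\[
r^n \;\le\; p_1 r^{n-1} + p_2 r^{n-2} + \dots + p_n,
\]
i.e.\ $P(r) \le 0$.

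Finally I would argue that $P(r) \le 0$ implies $r \le \lambda_1$. Since $p_n > 0$ we have $P(0) = -p_n < 0$, and $P(x) \to +\infty$ as $x \to +\infty$, so $P$ has at least one positive real root; by Descartes' rule of signs applied to $P(x) = x^n - \sum_i p_i x^{n-i}$ (only one sign change in the coefficient sequence), there is exactly one positive real root, which must coincide with $\lambda_1$, the Perron eigenvalue of the non-negative companion matrix of $P$. For every $x > \lambda_1$ the polynomial $P(x)$ is strictly positive, so $P(r) \le 0$ forces $r \le \lambda_1$, which is the desired inequality.

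The only step that needs any care is the last one: identifying the largest real root of $P$ with the Perron root $\lambda_1$ and justifying that $P(x) > 0$ for $x > \lambda_1$. This is immediate from Descartes' rule (or from Perron--Frobenius applied to the companion matrix, which is non-negative and, thanks to $p_n > 0$, has $\lambda_1$ as a simple dominant eigenvalue); no further work is required and the rest of the argument is a direct algebraic consequence of Lemma \ref{perron-frob n-dim}.
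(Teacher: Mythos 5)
Your proof is correct, and it takes a genuinely different route than the paper. The paper first packages the system \eqref{systemofequations} as $F v = w$ for $v = (1-\nu_1, \dots, 1-\nu_n)^T$, invokes the Collatz--Wielandt (min-over-ratios) bound for the Perron root of the irreducible companion matrix $F$, obtaining $\min\{\nu_1/(1-\nu_1),\, (1-\nu_1)/(1-\nu_2),\dots\} \le \lambda_1$, and then uses Lemma~\ref{perron-frob n-dim} only at the last step to identify the minimum as $\nu_1/(1-\nu_1)$. You instead feed the ratio bounds from Lemma~\ref{perron-frob n-dim} into a telescoping estimate $(1-\nu_i)/(1-\nu_1) \le r^{-(i-1)}$, substitute directly into the first equation of the system, and obtain the polynomial inequality $P(r)\le 0$, from which $r\le\lambda_1$ follows by the sign behavior of $P$ (one sign change in the coefficients, $P(0)=-p_n<0$, $P\to+\infty$). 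What each buys: the paper's route is shorter once one is willing to cite Collatz--Wielandt, and it fits cleanly with the matrix identity machinery used elsewhere in the paper; your route is more elementary and self-contained, needing nothing beyond Descartes' rule (or monotonicity of $1-\sum p_i x^{-i}$) and the obvious identification of the unique positive root with the Perron root, and it makes the role of the nonnegativity of $(p_i)$ and the inequality direction completely transparent. Both hinge equally on Lemma~\ref{perron-frob n-dim}.
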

\begin{proof}
	Let us consider the matrix $F$ defined as follows:
	\[
	F = \begin{pmatrix}
		p_1& p_2 & \dots & p_{n-1} & p_n \\
		1& 0 & \dots &0  & 0 \\
		0& 1& \dots &0 &0 \\
		& & \vdots& & \\
		0& 0 & \dots &1 &0
	\end{pmatrix}.
	\]
	This is an irreducible matrix, and the following identity holds:
	\[
	F \begin{pmatrix}
		1 - \nu_1 \\ 1- \nu_2 \\ \vdots \\ 1 - \nu_{n-1} \\ 1-  \nu_n
	\end{pmatrix} = \begin{pmatrix}
		\nu_1 \\ 1 - \nu_1 \\ \vdots \\ 1 - \nu_{n-2} \\ 1- \nu_{n-1}
	\end{pmatrix}.
	\]
	Perron-Frobenius theorem applies here, and the Collatz-Wieland formula yields
	\[
	\min \left\lbrace \frac{\nu_1}{1 - \nu_1}, \frac{1 - \nu_1}{1 - \nu_2}, \dots, \frac{1 - \nu_{n-1}}{1 - \nu_n} \right\rbrace  \le \lambda_1.
	\]
	Keep in mind that the characteristic polynomial of $F$ is precisely the polynomial in the statement of the corollary. Finally, Lemma \ref{perron-frob n-dim} tells us that this minimum just equals $\dfrac{\nu_1}{1 - \nu_1}$.
\end{proof}

\subsection{Proofs of the main results}

\begin{proof}[Proof of Proposition \ref{L:reduction to free groups}]
	As $F_{\mu'}(e, p(g)) \ge F_{\mu}(e, g)$ for any $g \in F_m$, we have $\rho_{\mu'}(p(g)) \ge \rho_{\mu}(g)$ for any $g \in F_m$ as well, therefore,
	\[
	\sum\limits_i \frac{1}{1 + \rho_{\mu'}(t_i)^{-1}} + \frac{1}{1 + \rho_{\mu'}(t_i^{-1})^{-1}} \ge \sum\limits_i \frac{1}{1 + \rho_{\mu}(a_i)^{-1}} + \frac{1}{1 + \rho_{\mu}(a_i^{-1})^{-1}} \ge 1.
	\]
	Without loss of generality, we can assume that there exists an index $1 \le i \le m$ such that \\ $\limsup\limits_{k \rightarrow \infty} (F_{\mu'}(e, t_i^k))^{- \frac{1}{k}  } < e^{l(t_i)}$. In particular, there is $k' > 1$ such that $F_{\mu'}(e, t_i^{k'})^{- \frac{1}{k'} } < e^{l(t_i)}$. But this inequality can be rewritten as follows:
	\[
	F_{\mu'}(e, t_i^{k'}) > e^{-k' l(t_i)} \Leftrightarrow -\log(F_{\mu'}(e, t_i^{k'})) = d_{\mu'}(e, t_i^{k'}) < k' l(t_i) = l(t_i^{k'}).
	\]
	Therefore, we can apply Lemma \ref{main lemma} to finish the proof.
\end{proof}

\begin{proof}[Proof of Theorem \ref{one geodesic}]
	First of all, let us denote $p_k = F_{\mu}(e, a_i^k; B \setminus \{a_i^k\}) > 0$, where $B = \{ a_i, \dots, a_i^n \}$, and rewrite the equations \eqref{system of equations 1} to obtain the following system:
	\[
	\begin{cases}
		p_1 (1 - \nu(C(a_i))) + p_2 (1 - \nu(C(a_i^2))) + \dots + p_n (1- \nu(C(a_i^n))) = \nu(C(a_i)) \\
		p_1 \nu(C(a_i))+ p_2 (1- \nu(C(a_i))) + \dots + p_n (1 - \nu(C(a_i^{n-1}))) = \nu(C(a_i^2)) \\
		\vdots \\
		p_1 \nu(C(a_i^{n-1})) + p_2 \nu(C(a_i^{n-2})) + \dots + p_{n-1} \nu(C(a_i))+ p_n (1- \nu(C(a_i))) = \nu(C(a_i^n)).
	\end{cases}
	\]
	Corollary \ref{important corollary}, applied to this system, implies that 
	\begin{equation}
		\label{estimate}
		\dfrac{\nu(C(a_i))}{1 - \nu(C(a_i))} \le \lambda_1,
	\end{equation}
	where $\lambda_1$ is the largest root of $x^n - p_1 x^{n-1} - \dots - p_n = 0$. 
	Now we only have to prove that
	\[
	\lim_{k \rightarrow \infty} F_{\mu}(e , a_i^k)^{\frac{1}{k}} = \lambda_1.
	\]
	As a consequence from Lemma \ref{lemma1}, for every $k \ge n$ we have
	\[
	F_B \begin{pmatrix}
		F_{\mu}(e, a_i^{k-1}) \\ \vdots \\ F_{\mu}(e, a_i^{k-n})
	\end{pmatrix} = \begin{pmatrix}
	F_{\mu}(e, a_i^{k}) \\ \vdots \\ F_{\mu}(e, a_i^{k-n+1})
\end{pmatrix},
	\]
	where 
	\[
	F_B = \begin{pmatrix}
		p_1 & p_2 & \dots & p_{n-1} & p_n \\
		1& 0 & \dots &0  & 0 \\
		0& 1& \dots &0 &0 \\
		& & \vdots& & \\
		0& 0 & \dots &1 &0
	\end{pmatrix}.
	\]
	Here we apply the Perron-Frobeinus theorem once again. As $F_B$ is irreducible, we can use the min-max Collatz-Wielandt formula to get that for every $k > n$ we have
	\[
	\min \left\lbrace  \dfrac{F_{\mu}(e , a_1^k)}{F_{\mu}(e , a_1^{k-1})}, \dots, \dfrac{F_{\mu}(e , a_1^{k-n+1})}{F_{\mu}(e , a_1^{k-n})} \right\rbrace \le \lambda_1 \le \max \left\lbrace  \dfrac{F_{\mu}(e , a_1^k)}{F_{\mu}(e , a_1^{k-1})}, \dots, \dfrac{F_{\mu}(e , a_1^{k-n+1})}{F_{\mu}(e , a_1^{k-n})} \right\rbrace.
	\]
	We know that $\lim\limits_{k \rightarrow \infty} \dfrac{F_{\mu}(e , a_1^k)}{F_{\mu}(e , a_1^{k-1})}$ exists, we just need to establish that the limit is, indeed, the largest eigenvalue. Denoting it by $L$, and applying $\lim\limits_{k \rightarrow \infty}$, we get
	\[
	L \le \lambda_1 \le L.
	\]
	So,
	\begin{equation}
		\label{estimate2}
		L = \lim_{k \rightarrow \infty} F_{\mu}(e , a_i^k)^{\frac{1}{k}} = \lambda_1.
	\end{equation}
	Combining \eqref{estimate} and \eqref{estimate2}, we get \eqref{E:lower bounds for fp function}.
\end{proof}
\textbf{Remark.} We would like to note that the equations \eqref{system of equations 1} and Lemma \ref{lemma6} imply the following matrix identity corresponding to the barrier $B = \{ a_i, \dots, a_i^n \}$:
\begin{equation}
	\label{decomposition}
	F_B^n = S P_B,
\end{equation}
where $P_B$ is the same operator as the one defined in Theorem \ref{matrix identity}, $S$ is the permutation matrix which reverses the rows. If we denote $p_{jk} := F_\mu(a_i^{-n+j}, a_i^k; B \setminus \{ a_i^k \})$ for $1 \le j,k \le n$, Lemma \ref{lemma6}, applied to $B' = a_i^{-n+j} B$ for various $j$, turns out to be equivalent to the following matrix identity in $\text{Mat}_n(\mathbb{R})$ for all $1 \le i \le m$ and $1 \le j \le n$.
\[
	F_B \begin{pmatrix}
		p_{j1} & p_{j2} & \dots & p_{jn} \\
		p_{j+1 \, 1} & p_{j+1 \, 2} & \dots & p_{j+1 \, n} \\
		\vdots & \vdots & \vdots & \vdots \\
		p_{n1} & p_{n2} & \dots & p_{nn} \\
		1& 0& \dots& 0  \\
		0& 1& \dots & 0 \\
		& & \ddots & 
	\end{pmatrix} = \begin{pmatrix}
		p_{j-1 \, 1} & p_{j-1 \, 2} & \dots & p_{j-1 \, n} \\
		p_{j 1} & p_{j 2} & \dots & p_{j  n} \\
		\vdots & \vdots & \vdots & \vdots \\
		p_{n1} & p_{n2} & \dots & p_{nn} \\
		1& 0& \dots& 0  \\
		0& 1& \dots & 0 \\
		& & \ddots & 
	\end{pmatrix}.
\]
It remains to note that for $j=n$ the defined above matrices equal $F_B$, and for $j = 1$ we get $SP_B$.
\section{Disproving \eqref{conjecture with spectral radius} in the general case}
First of all, let us prove Theorem \ref{disproof}, then we will present a random walk on $F_m$ which satisfies the property in Theorem \ref{disproof}, and is not supported on the generating set.

\begin{proof}[Proof of Theorem \ref{disproof}]
	First of all, let us notice that if the random walk satisfies the property mentioned in the statement, then Lemma \ref{lemma1} implies that
	\[
		F_\mu(e, a_j^k) = F_\mu(e, a_j) F_\mu(e, a_j^{k-1})
	\]
	for any $1 \le j \le m$, $k > 1$. Therefore,
	\[
	\sum_{j=1}^m \dfrac{1}{1 + \rho_{\mu}(a_i)} + \dfrac{1}{1 + \rho_{\mu}(a_i^{-1})} = \sum_{j=1}^m \dfrac{1}{1 +  F_{\mu}(e, a_i)^{-1} } + \dfrac{1}{1 +  F_{\mu}(e, a_i^{-1})^{-1}}.
	\]
	Moreover, the fact that $a_j$ is an $a_j$-barrier for all $j \ne i$ implies that $\text{supp}(\mu) \cap C_{fin}(a_j) = \{a_j\}$ by the definition of the barriers. This allows to apply Proposition \ref{corollary1} in a certain way: consider an $a_i$-barrier $B$ which fully lies in $C_{fin}(a_i)$. Then
	\[
	\sum_{b \in B} F_\mu(e, b; B \setminus \{b\}) (1 - \nu(C(b^{-1}))) = \nu(C(a_i)).
	\]
	Observe that for every $b \in B \setminus \{a_i\}$ we have $\nu(C(b^{-1})) < \nu(C(a_i))$: if $b^{-1} = w_1 \dots w_r$ is the reduced representation of $b^{-1}$, then we can fix the first index $t > 0$ such that $w_t = a_i^{\pm 1}$. As $w_r = a_i^{-1}$, $t$ is always well-defined. In this case Lemma \ref{lemma1} implies
	\[
	\nu(C(b^{-1})) = F_\mu(e, w_1) \dots F_\mu(e, w_{t-1}) \nu(C(w_t \dots w_r)) < \nu(C(w_t \dots w_r)) = \nu(C(w_t^{\pm 1} \dots w_r^{\pm 1})) \le \nu(C(a_i)),
	\]
	as $w_t^{\pm 1} = a_i$ and $C(w_t^{\pm 1} \dots w_r^{\pm 1}) \subset C(a_i)$ (here antisymmetry implies $\nu(C(g)) = \nu(C(\hat{g}))$). Notice that the same argument implies that $\nu(C(b^{-1})) = \nu(C(a_i))$ only if $b = a_i$. Therefore,
	\[
	\sum_{b \in B} F_\mu(e, b; B \setminus \{b\}) (1 - \nu(C(a_i))) < \sum_{b \in B} F_\mu(e, b; B \setminus \{b\}) (1 - \nu(C(b^{-1}))) = \nu(C(a_i))
	\]
	is equivalent to $B \ne \{a_i\}$. However,
	\[
	\sum_{b \in B} F_\mu(e, b; B \setminus \{b\}) (1 - \nu(C(a_i))) < \nu(C(a_i)) \Leftrightarrow F_\mu(e, a_i) < \sum_{b \in B} F_\mu(e, b; B \setminus \{b\}) < \dfrac{\nu(C(a_i))}{1 - \nu(C(a_i)))}.
	\]
\end{proof}

\begin{example}
	\label{example3.1}
	Consider an antisymmetric random walk on $F_m$ supported on the set $\{ a_i, a_i^{-1}, a_1 a_2, a_1^{-1} a_2^{-1} \}_{1 \le i \le m}$, and assume that $\mu(a_i) = \mu(a_i^{-1})$, $\mu(a_1 a_2) = \mu(a_1^{-1} a_2^{-1})$. Then it is easy to see that $\{ a_j \}$ is an $a_j$-barrier for $j > 1$, and $a_1$ is still an $a_1^2$-barrier, as there is no way to enter $C_{fin}(a_1^2)$ directly from $a_1 a_j^{\pm 1}$. Therefore, we get
	\[
	\sum\limits_i \frac{1}{1 + \rho_{\mu}(a_i)^{-1}} + \frac{1}{1 + \rho_{\mu}(a_i^{-1})^{-1}} = 2 \left( \frac{1}{1 + F_\mu(e, a_i)^{-1}} + \sum_{2 \le i \le m} \nu(C(a_i)) \right) < 2 \left( \sum_{1 \le i \le m} \nu(C(a_i)) \right) = 1
	\] 
	due to $C(a_i)$ being disjoint.
\end{example}

\begin{example}
	\label{example3.2}
	We would like to show that \eqref{conjecture with spectral radius} breaks in the symmetric case as well. Consider Example \ref{supprt = gens + ab, b-1a-1}. Due to $\{b\}$ being a $b$-barrier, and $\{a^{-1}\}$ being an $a^{-1}$-barrier, we have
	\[
	F_\mu(e, a^{-k}) = F_\mu(e, a^{-1})^k = F_\mu(e, a)^k, \quad F_\mu(e, b^k) = F_\mu(e, b)^k = F_\mu(e, b^{-1})^k
	\]
	due to Lemma \ref{lemma1}. Therefore, $\rho_\mu(a) = \rho_\mu(a^{-1}) = F_\mu(e, a)$, and $\rho_\mu(b) = \rho_\mu(b^{-1}) = F_\mu(e, b)$.
	
	To finish the argument, we consider an automorphism $s : F_2 \rightarrow F_2$, defined by $a \mapsto a, b \mapsto a^{-1} b$. Denoting $\mu' := s_*(\mu)$ and using the invariance of the first-passage functions w.r.t. automorphisms, we get
	\[
		\dfrac{1}{1 + F_\mu(e, a)^{-1}} + \dfrac{1}{1 + F_\mu(e, b)^{-1}} = \dfrac{1}{1 + F_{\mu'}(e, a)^{-1}} + \dfrac{1}{1 + F_{\mu'}(e, a^{-1}b)^{-1}}.
	\]
	As the support of $\mu'$ is $\{a,a^{-1},b,b^{-1}\}$, the first-passage function is still multiplicative, and we have an exact identity for the measures of the cylinder sets for $\nu' = s_*(\nu)$, so we get
	\[
	\begin{aligned}
		& \dfrac{1}{1 + F_{\mu'}(e, a)^{-1}} + \dfrac{1}{1 + F_{\mu'}(e, a^{-1}b)^{-1}} = \\ &= \dfrac{1}{1 + F_{\mu'}(e, a)^{-1}} + \dfrac{1}{1 + F_{\mu'}(e, a)^{-1} F_{\mu'}(e,b)^{-1}} < \\ &< \dfrac{1}{1 + F_{\mu'}(e, a)^{-1}} + \dfrac{1}{1 + F_{\mu'}(e,b)^{-1}} = \nu'(C(a)) + \nu'(C(b)) = \dfrac{1}{2}.
	\end{aligned}
	\]
\end{example}

\section{Further directions}
\begin{itemize}
	\item As pointed out to the author by Kunal Chawla, the existing results on continuity of the drift and entropy, in particular, \cite[Proposition 2.3]{gouezel2018entropy} and \cite[Theorem 2.9]{gouezel2018entropy}, are applicable due to the random walks considered being finitely supported. Therefore, the dimension $\text{dim}(\nu) = h/l$ also continuously depends on $\mu$ due to \cite{blachere2011harmonic} and \cite{tanaka2017dimension}, which allows us to extend Corollary \ref{main corollary} to sufficiently small perturbations of measures supported strictly on the powers of generators. Also, see \cite{https://doi.org/10.48550/arxiv.2212.06581} for the most recent results in this direction.
	\item At the same time, we would like to note that Theorem \ref{disproof} does not yield any actual counterexamples to the Fuchsian version of \eqref{conjecture with spectral radius}: while \eqref{conjecture with spectral radius} does not hold for every random walk on free groups, it can still hold when we pass to Fuchsian groups as the corresponding first-passage functions are strictly larger for Fuchsian groups.
	
	Besides, even if \eqref{conjecture with spectral radius} is not true for arbitrary finite range random walks on Fuchsian groups, it still doesn't necessarly contradict the singularity conjecture itself, as $\ell(g) > d_\mu(e, g)$ is expected to hold for an element other than a power of a side-pairing generator of $\Gamma$.
	
	Nevertheless, Theorem \ref{disproof} should still be considered a negative result, and it confirms that reducing the conjecture to covering random walks on free groups cannot settle the singularity conjecture alone; we believe that other approaches are necessary in order to settle Conjecture \ref{Fuchsian singularity conjecture}.
	
	\item Finally, we would like to remark that Theorem \ref{matrix identity} proved to be not enough by itself to establish Theorem \ref{one geodesic} and, consequently, \eqref{conjecture with spectral radius}. Essentially, the proof required us to represent the respective operator $P_B$ as a power of an operator with a much simpler structure (see the remark after the proof of Theorem \ref{one geodesic}). It seems reasonable that Lemma \ref{lemma6} yields similar (to \eqref{decomposition}) decompositions for barrier operators $P_B$ in the general case, and we hope that they will, potentially, suggest the correct generalization of \eqref{conjecture with spectral radius} for arbitrary random walks on free groups.
\end{itemize}
\printbibliography
\end{document}